\documentclass[12pt]{amsart}
\usepackage{amscd}
\usepackage{amsmath}
\usepackage{amsxtra}
\usepackage{amsfonts}
\usepackage{amssymb}

\oddsidemargin  0.0in
    \evensidemargin 0.0in
    \textwidth      15.0cm
    \textheight   22.0cm
    \headheight     0.0in
    \topmargin      0.0in
\pagestyle {plain}

\usepackage{amsthm}
\usepackage{amssymb}
\usepackage{enumerate}
\usepackage{graphicx}
\usepackage{amsmath}
\usepackage{subfigure}
\usepackage[linkcolor=red,anchorcolor=blue,citecolor=green]{hyperref}
\usepackage{color}

\hypersetup{
     colorlinks,%
    citecolor=blue,%
    filecolor=black,%
    linkcolor=magenta,%
    urlcolor=black
}

 \newtheorem{theorem}{Theorem}[section]
 \newtheorem{Def}[theorem]{Definition}
 \newtheorem{Prop}[theorem]{Proposition}
 \newtheorem{Lem}[theorem]{Lemma}

 \newtheorem{Example}[theorem]{Example}

\newcommand{\R}{{\mathbb R}}
\newcommand{\Z}{{\mathbb Z}}

\newcommand{\N}{{\mathbb N}}






\title{Undersampled windowed exponentials and their applications}

\author{Chun-Kit Lai}

\address{Department of Mathematics, San Francisco State University,
1600 Holloway Avenue, San Francisco, CA 94132.}

 \email{cklai@sfsu.edu}

\author{Sui Tang}
\address{Department of Mathematics, Johns Hopkins University,
3400 North Charles Street, Baltimore, MD, 21218.}

\email{stang@math.jhu.edu}

\subjclass[2010]{94O20, 42C15, 42C30}
\keywords{Completeness, Frames, Spectra, Toeplitz operators, windowed exponentials}


\begin{document}
\maketitle

\begin{abstract}
We characterize the completeness and frame/basis property of a union of under-sampled windowed exponentials of the form
$$
{\mathcal F}(g): =\{e^{2\pi i n x}: n\ge 0\}\cup \{g(x)e^{2\pi i nx}: n<0\}
$$
for $L^2[-1/2,1/2]$ by the spectra of the Toeplitz operators with the symbol $g$. Using this characterization, we classify all real-valued functions $g$ such that ${\mathcal F}(g)$ is complete or forms a frame/basis. Conversely, we use the classical non-harmonic Fourier series theory to determine all $\xi$ such that the Toeplitz operators with symbol $e^{2\pi i \xi x}$ is injective or invertible. These results demonstrate an elegant interaction between frame theory of windowed exponentials and Toeplitz operators. Finally, we use our results to answer some open questions in dynamical sampling,  and derivative samplings on Paley-Wiener spaces of bandlimited functions. 

 \end{abstract}


\medskip

\section{Introduction}

\noindent{\bf Background.} Let $\Omega$ be a measurable set of finite Lebesgue measure in ${\mathbb R}^d$. Suppose that  $g\in L^2(\Omega)\setminus\{0\}$ and $\Lambda$ is a countable subset of ${\mathbb R}^d$. We define the collection of {\it windowed exponentials} by
$$
{\mathcal E}(g,\Lambda) : = \{g(x)e^{2\pi i \langle\lambda ,x\rangle}: \lambda\in\Lambda\}.
$$
A natural question is to determine when a union of finitely many windowed exponentials $\bigcup_{j=1}^{N}{\mathcal E}(g_j,\Lambda_j)$ is complete or forms a frame/Riesz basis for $L^2(\Omega)$.  Let us recall the definitions as below:

\begin{Def}
{\rm The collection of windowed exponentials $\bigcup_{j=1}^{N}{\mathcal E}(g_j,\Lambda_j)$ forms a {\it frame} for $L^2(\Omega)$ if there exists $0<A\le B<\infty$ such that}
$$
A\|f\|^2\le \sum_{j=1}^N\sum_{\lambda\in\Lambda_j}\left|\int f(x)\overline{g_j(x)}e^{-2\pi i \langle\lambda,x\rangle}dx\right|^2\le B\|f\|^2.
$$
{\rm It is called a \it{Riesz basis} if the collection forms a frame with the property that every $f\in L^2(\Omega)$ is expanded uniquely as  $\sum \alpha_{j,\lambda} g_j(x)e^{2\pi i \langle\lambda,x\rangle}$. It is called minimal if no single windowed exponential  of $\bigcup_{j=1}^{N}{\mathcal E}(g_j,\Lambda_j)$ lies in the $L^2$ span of the other  windowed exponentials of $\bigcup_{j=1}^{N}{\mathcal E}(g_j,\Lambda_j)$. } 

\medskip

{\rm The collection of windowed exponentials is called {\it complete} if  the equations}
$$
\int_{\Omega} f(x)\overline{g_j(x)}e^{-2\pi i \langle\lambda,x\rangle}dx = 0,
  \ \forall   \lambda \in\Lambda_j, j=1,..,N,
$$
{\rm then imply that $f= 0$ a.e. on $\Omega$.}

\medskip

{\rm We say that $\{g_1,...,g_N\}$ is {\it admissible} if there exists some $\Lambda_j$ such that $\bigcup_{j=1}^N{\mathcal E}(g_j,\Lambda_j)$ forms a frame of windowed exponentials for $L^2(\Omega)$.}
\end{Def}
\medskip

The question in determining the completeness and frame/basis properties of windowed exponentials is closely related to wavelet theory and Gabor analysis since problems about the frame of translate in the multi-resolution analysis and regular Gabor system can be reduced to those of windowed exponentials via respectively the Fourier transform and the Zak transform.  There is rich literature in this direction.  One can refer to \cite{christensen2003introduction, heil2007history, heil2010basis} for an introduction and \cite{balan2006density, heil2008density, heil2012duals} for some recent work about windowed exponentials. 
\medskip

We also notice that if there is only one window, namely $g = \chi_{\Omega}$, then if ${\mathcal E}(\chi_{\Omega},\Lambda)$  is called a {\it Fourier frame} if it forms a frame.  Introduced by Duffin and Schaeffer \cite{duffin1952class} in the 1950s,  Fourier frames have known to be a fundamental building block of sampling theory in the applied harmonic analysis, as then 
functions with frequency compactly supported in $\Omega$ can be stably recovered from their samples on 
$\Lambda$.  The starting point of Fourier frames can be traced back to Paley-Wiener, Levinson, Plancherel-Polya, and Boas for studying the possibility of non-harmonic series expansion \cite{levinson1940gap, redheffer1983completeness, seip1995connection}. We refer to \cite{adcock2015weighted, adcock2017computing, aldroubi2001nonuniform,  benedetto1992irregular,casazza2006density, grochenig1999irregular, jaffard1991density,  ortega2002fourier} and the reference therein for some recent advances and applications in sampling theory.

\medskip

In general, it is well-known that every bounded set $\Omega \subset \R^d$ admits a Fourier frame. Indeed, if we cover $\Omega$ by a cube, then the exponential orthonormal basis with frequency set denoted by $\Lambda$ on the cube induces a tight Fourier frame for $L^2(\Omega)$. More generally, if $0<m\le g\le M<\infty$ on $\Omega$, then ${\mathcal E}(g,\Lambda)$ forms a frame for $L^2(\Omega)$.  In \cite{gabardo2014frames} (See also \cite{lai2011fourier}), the class of all  admissible windows $\{g_1,...,g_N\}$ were completely classified. The necessary and sufficient condition for $\{g_1,...,g_N\}$ to be admissible for $L^2(\Omega)$ is that there exists $c>0$ with the property that
$$
\max_{\{j: \|g_j\|_{\infty}<\infty\}} |g_j|\ge c>0. \ \mbox{a.e. on} \ \Omega
$$
In particular, when there is only one window $g$, $g$ is admissible if and only if $0<m\le g\le M$ a.e. on $\Omega$.

\medskip

If $\{g_1,...,g_N\}$ is admissible, it is known that (see \cite{gabardo2014frames}) one can produce a frame of multi-windowed exponentials by taking $\Lambda_j$ in an oversampling manner in the sense that the (Beurling) density of $\Lambda_j$ are strictly greater than the measure of $\Omega$. 

\medskip

Despite the success of determining window exponentials using oversampled $\Lambda_j$,  there are fewer papers studying the case in which each $\Lambda_j$ is indeed undersampled or even of Beurling density zero, yet a union of windowed exponentials $\bigcup_{j=1}^{N}{\mathcal E}(g_j,\Lambda_j)$ forms a frame and works perfectly.  This paper considers the possibility of forming a frame in the undersampling circumstances.  We focus on the following case  let ${\mathbb N}_0={\mathbb N^+}\cup \{0\}$ and ${\mathbb N}^{-}$ be the set of all negative integers and define
\begin{equation}\label{window1}
{\mathcal F}(g) = {\mathcal E}(\chi_{[-1/2,1/2]}, {\mathbb N}_0)\cup {\mathcal E}(g, {\mathbb N}^{-}).
\end{equation}
We ask for what $g$  the set of functions ${\mathcal F}(g)$  is complete/ or forms a Riesz basis/frame for $L^2[-1/2,1/2]$.
 The consideration of the completeness properties ${\mathcal F}(g)$ has immediate applications to the dynamical sampling theory and the nonuniform derivative sampling theory, which we will discuss in a later section.

\medskip

\noindent{\bf Main Results.} It turns out the solution to the question above can be characterized completely by the invertibility of the Toeplitz operators. While the connection between Toeplitz operator and windowed Fourier frame  should have  been known in the frame theory community (see Theorem 2.1 of {\color{red} Casazza,Christensen Kalton} paper with $\Lambda = {\mathbb N}$), this paper used it rigorously, probably for the first time, to study the frame propoerty of a class of windowed exponentials, ${\mathcal F}(g)$.   First,  let us review some known facts about Toeplitz operators. 
\begin{Def}
{\rm We say that a bounded linear operator $T:\ell^2({\mathbb N}_0)\rightarrow\ell^2({\mathbb N}_0)$ is a {\it Toeplitz operator} if, with respect to the standard basis of $\ell^2({\mathbb N}_0)$, $T$ admits a matrix representation}
\begin{equation}\label{eqT}
T =
\left[
  \begin{array}{cccc}
    a_0&a_{-1}& a_{-2}&\cdots\\
    a_{1}&a_0 & a_{-1} & \ddots \\
    a_{2}&a_{1}& \ddots & \ddots \\
    \vdots&\ddots & \ddots&  \\
  \end{array}
\right]
\end{equation}
\end{Def}

 {\rm Let $1 \leq p \leq \infty$ and $\tilde{H}^p$ be a closed subspace of  $L^p[-1/2,1/2]$  defined by} $$\tilde{H}^p=\{\tilde{ f}\in L^p[-1/2,1/2]:  \langle f,e^{2\pi i nx} \rangle=0 \text{ for } n \in \N^-\}.$$

{\rm Let $\phi\in L^{\infty}([-1/2,1/2])$ and let $P_+:L^2[-1/2,1/2] \rightarrow \tilde{H}^2 $ be the Riesz projection operator. Then we can define  a bounded linear operator $T_{\phi}:\tilde{H}^2 \rightarrow \tilde{H}^2 $ by}
$$
T_\phi f : =  P_+(\phi f).
$$
{\rm  $T_{\phi}$ is called the Topelitz operator with symbol $\phi$ and $T_{\phi}$ is bounded. The matrix representation of $T_{\phi}$ in terms of the standard basis $\{e^{2\pi i nx}: n\in \N_0\}$ of $\tilde{H}^2$  is given by
 $$
T_{\phi} =
\left[
  \begin{array}{cccc}
    \widehat{\phi}(0)&\widehat{\phi}(-1)& \widehat{\phi}(-2)&\cdots\\
    \widehat{\phi}(1)&\widehat{\phi}(0) & \widehat{\phi}(-1) & \ddots \\
    \widehat{\phi}(2)&\widehat{\phi}(1)& \ddots & \ddots \\
    \vdots&\ddots & \ddots&  \\
  \end{array}
\right].
 $$
  Conversely, if $T$ is a bounded Toeplitz operator with representation \eqref{eqT},  then there exists $\phi\in L^\infty [-1/2,1/2]$ such that $T$ is the matrix representation of  $T_{\phi}$. Furthermore, we have $\|T_{\phi}\| = \|\phi\|_{\infty}$}

\medskip

The following theorem is our main result in characterizing the basis/frame properties of ${\mathcal F}(g)$  by the invertibility of the Toeplitz operators $T_g$.

\begin{theorem}\label{thmain1}
\begin{enumerate}
\item  ${\mathcal F}(g)$ is complete if and only if  $T_g$ is injective.
\item ${\mathcal F}(g)$  forms a frame if and only if $T_g$ is both norm-bounded above and below.
\item ${\mathcal F}(g)$ forms a Riesz basis if and only if $T_g$ is bounded and invertible.
\end{enumerate}
\end{theorem}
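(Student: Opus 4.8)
The plan is to route everything through the analysis operator of ${\mathcal F}(g)$. Write $e_n(x)=e^{2\pi i n x}$, let $P_+\colon L^2[-1/2,1/2]\to\tilde{H}^2$ be the Riesz projection, $P_-=I-P_+$, and $M_{\bar g}h=\bar g h$. Consider the analysis operator of ${\mathcal F}(g)$,
\[
\Theta f \;=\; \big(\langle f,e_n\rangle\big)_{n\ge 0}\ \oplus\ \big(\langle f,g e_n\rangle\big)_{n<0}\ \in\ \ell^2(\N_0)\oplus\ell^2(\N^-).
\]
By definition, ${\mathcal F}(g)$ is complete iff $\Theta$ is injective, is a frame iff $\Theta$ is bounded above and bounded below, and is a Riesz basis iff $\Theta$ is in addition surjective, i.e.\ boundedly invertible. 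Using $\langle f,e_n\rangle=\langle P_+f,e_n\rangle$ for $n\ge 0$ and $\langle f,g e_n\rangle=\widehat{f\bar g}(n)$, one obtains at once the identity $\|\Theta f\|^2=\|P_+f\|^2+\|P_-(\bar g f)\|^2$. So it remains to read off injectivity, boundedness below, boundedness, and invertibility of $\Theta$ from the single map $f\mapsto P_-(\bar g f)$.

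To do that, I would identify $\ell^2(\N_0)$ with $\tilde{H}^2$ and $\ell^2(\N^-)$ with $(\tilde{H}^2)^\perp$ through the exponential orthonormal bases, and use $L^2[-1/2,1/2]=\tilde{H}^2\oplus(\tilde{H}^2)^\perp$ on the domain side as well. With respect to these decompositions $\Theta$ is block lower triangular,
\[
\Theta \;=\; \begin{pmatrix} I & 0 \\ P_-M_{\bar g}\big|_{\tilde{H}^2} & S \end{pmatrix},
\qquad S \;:=\; P_- M_{\bar g}\big|_{(\tilde{H}^2)^\perp},
\]
the off-diagonal block having norm at most $\|g\|_\infty$. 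Because the $(1,1)$ block is the identity: $\Theta$ is injective iff $S$ is injective; $\Theta$ is boundedly invertible iff $S$ is; and, by an elementary estimate exploiting the norm bound on the off-diagonal block, $\Theta$ is bounded below iff $S$ is bounded below. Also $\Theta$ is bounded iff $S$ is bounded, which --- by the Toeplitz facts recalled just before the theorem, applied to $S$ via the identification of the next step --- happens exactly when $g\in L^\infty$. Hence each of the three properties of ${\mathcal F}(g)$ reduces to the corresponding property (injective; bounded above and below; boundedly invertible) of the single operator $S$.

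Finally I would identify $S$ with the Toeplitz operator $T_g$ via the conjugate-linear isometry $J\colon L^2[-1/2,1/2]\to L^2[-1/2,1/2]$, $Jf=e^{-2\pi i x}\overline f$. One checks $J^2=I$ and that $J$ interchanges $\tilde{H}^2$ and $(\tilde{H}^2)^\perp$; then a direct computation with Fourier coefficients yields
\[
S \;=\; J\,T_g\,J \qquad\text{on }(\tilde{H}^2)^\perp .
\]
Being an isometric bijection, $J$ preserves nullity, the optimal lower bound, the operator norm, and invertibility, so these pass between $S$ and $T_g$. Chaining this with the previous paragraph gives ${\mathcal F}(g)$ complete $\iff$ $T_g$ injective; ${\mathcal F}(g)$ a frame $\iff$ $T_g$ bounded above and below; ${\mathcal F}(g)$ a Riesz basis $\iff$ $T_g$ bounded and invertible. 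This is exactly (1)--(3).

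The step I expect to be the main obstacle is pinning down the flip identity $S=JT_gJ$: one has to keep track of the unit frequency shift produced by the factor $e^{-2\pi i x}$ while juggling the projections onto the frequency ranges $\{n\ge0\}$, $\{n\le-1\}$, and $\{n\le0\}$, and an off-by-one there would undermine the entire correspondence. A lesser point that still needs care is making the block-triangular ``bounded below'' equivalence airtight through the perturbation estimate; the remaining ingredients --- the norm identity for $\Theta$, and the classical equivalence between boundedness of a Toeplitz operator and $L^\infty$-membership of its symbol --- are routine or already available in the excerpt.
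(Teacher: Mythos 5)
Your proposal is correct and follows essentially the same route as the paper: the paper also reduces everything to the analysis operator, writes it in block-triangular form with an identity block (its $T_{\widetilde{g}}$ and Hankel block are exactly your $S=P_-M_{\bar g}|_{(\tilde H^2)^\perp}$ and $P_-M_{\bar g}|_{\tilde H^2}$ in sequence-space language), transfers the corner block to $T_g$ by the same conjugation symmetry (your flip identity $S=JT_gJ$, which checks out with no off-by-one, is the paper's relation $T_{\widetilde{g}}f=\overline{T_g\overline{f}}$), and proves the lower frame bound by the same elementary perturbation estimate (the paper's Lemma 3.3). So the argument is sound and matches the paper's proof in all essentials.
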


\medskip

We will prove this theorem in Theorem \ref{th2}. This theorem gives us in principle a complete characterization of the frame and basis property of ${\mathcal F}(g)$ regarding the invertibility of the Toeplitz operator and the triviality of the Toeplitz kernel. This leads us to look for classical results on Toeplitz operators. In particular, we need theorems characterizing the frame property by the function values of $g$.    A complete classification for the invertible Toeplitz operators $T_g$ was given {\it analytically} through the Widom-Devinatz theorem in the 1960s.  We refer the reader to \cite{bottcher2013analysis, martinez2007introduction} for the detailed study.   Unfortunately, as far as we know, there is no easily checkable  criterion to check if the Toeplitz operator $T_g$ is invertible for a general complex-valued window $g$.

\medskip

In our paper, we will provide a complete characterization of the completeness and frame/bases property of ${\mathcal F}(g)$ when $g$ is real-valued (see Theorem \ref{threalg}).  After that, we will consider the complex windows $g_{\xi}(x) = e^{2\pi i \xi x}$ for $\xi \in \mathbb{R}$.  While from Toeplitz operator side, there is no easily checkable way to verify the invertibility of  $T_{g_{\xi}}$, we use some theories of non-harmonic Fourier series and completely characterize all $\xi$ for which ${\mathcal F}(g_{\xi})$ is complete/ a frame or a Riesz basis (Theorem \ref{th4}). Formulated in the Toeplitz operators, the following theorem may be of independent interest.

\medskip

\begin{theorem}\label{th0.2} Let $g_{\xi}(x) = e^{2\pi i \xi x}$ and $\xi \in \mathbb{R}$. Then
\begin{enumerate}
\item  $T_{g_{\xi}}$ is not injective if $\xi<-1/2$.
\item  $T_{g_{\xi}}$ is invertible if $-1/2<\xi<1/2$.
\item  $T_{g_{\xi}}$ is injective but  not normed-bounded below if $\xi =-1/2+n$ for $n=0,1,2,...$.
\item $T_{g_{\xi}}$ is injective and normed-bounded below but  not surjective if  $\xi>1/2$ and $\xi \ne -1/2+n$ for $n=0,1,2,...$
\end{enumerate}
\end{theorem}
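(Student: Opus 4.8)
I would prove Theorem~\ref{th0.2} by transferring everything to non-harmonic Fourier series via Theorem~\ref{thmain1} and then letting one entire function do all the work.

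\medskip
\noindent\emph{Set-up.} By Theorem~\ref{thmain1} it suffices to analyze the exponential system $\mathcal F(g_\xi)=\{e^{2\pi i\mu x}:\mu\in\Lambda_\xi\}$ in $L^2[-1/2,1/2]$, where $\Lambda_\xi=\N_0\cup(\xi+\N^-)=\{0,1,2,\dots\}\cup\{\xi-1,\xi-2,\dots\}$; since $\|g_\xi\|_\infty=1$ we have $\|T_{g_\xi}\|=1$, so boundedness above is automatic and Theorem~\ref{thmain1} reads: $T_{g_\xi}$ injective $\Leftrightarrow$ $\mathcal F(g_\xi)$ complete, bounded below $\Leftrightarrow$ frame, invertible $\Leftrightarrow$ Riesz basis. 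Identifying $L^2[-1/2,1/2]$ with the Paley--Wiener space $PW_\pi$ via $f\mapsto F(z)=\int_{-1/2}^{1/2}f(x)e^{-2\pi izx}\,dx$ (so $\langle f,e^{2\pi i\mu x}\rangle=F(\mu)$), completeness of $\mathcal F(g_\xi)$ means $\Lambda_\xi$ is a uniqueness set for $PW_\pi$, the frame property means $\Lambda_\xi$ is sampling, and the Riesz basis property means $\Lambda_\xi$ is a complete interpolating sequence. The key object is the generating function of $\Lambda_\xi$,
$$
G_\xi(z)=\frac{\pi^2}{\Gamma(-z)\,\Gamma(1-\xi+z)},
$$
which is entire with simple zeros exactly on $\Lambda_\xi$ (the poles of $\Gamma(-z)$ produce the zeros at $\N_0$, those of $\Gamma(1-\xi+z)$ the zeros at $\xi+\N^-$); from the reflection formula and $\Gamma(w+a)/\Gamma(w)\sim w^a$ one gets $G_\xi(z)\sim-\pi\sin\pi(z-\xi)\,(-z)^\xi$ as $|z|\to\infty$, so $G_\xi$ has exponential type $\pi$ with indicator diagram $[-i\pi,i\pi]$ and $|G_\xi(x)|\asymp|\sin\pi(x-\xi)|\,|x|^\xi$ on $\R$.

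\medskip
\noindent\emph{Injectivity dichotomy (handles (1) and the ``injective'' part of (2)--(4)).} From the asymptotics, $G_\xi\in L^2(\R)$ iff $2\xi<-1$, i.e.\ iff $\xi<-1/2$. If $\xi<-1/2$, then $G_\xi\in PW_\pi\setminus\{0\}$ vanishes on $\Lambda_\xi$, so $\Lambda_\xi$ is not a uniqueness set and $T_{g_\xi}$ is not injective: this is (1). If $\xi\ge-1/2$, then $G_\xi\notin PW_\pi$; since $G_\xi$ is the canonical product over the separated, density-$1$ set $\Lambda_\xi$, any $F\in PW_\pi$ with $F|_{\Lambda_\xi}=0$ gives an entire quotient $Q=F/G_\xi$ of minimal exponential type; as $|F|$ is bounded on $\R$ and $|G_\xi(x)|\gtrsim|x|^\xi$ off small neighbourhoods of its (separated) zeros, $Q$ grows at most polynomially on $\R$, hence is a polynomial by Phragm\'en--Lindel\"of, and then $F=QG_\xi$ cannot lie in $L^2(\R)$ unless $Q\equiv0$. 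So $\Lambda_\xi$ is a uniqueness set and $T_{g_\xi}$ is injective for every $\xi\ge-1/2$.

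\medskip
\noindent\emph{Frame/basis refinement.} For (2): the complete-interpolating property of $\Lambda_\xi$ is governed, via the Pavlov / Hrushchev--Nikolskii--Pavlov criterion, by the Muckenhoupt $(A_2)$ property of the weight attached to $\Lambda_\xi$; after absorbing the (harmless, separated) simple zeros, that weight is $\asymp|G_\xi(x+i)|^2\asymp(1+x^2)^\xi$, and $(1+x^2)^\xi\in(A_2)(\R)$ iff $-1<2\xi<1$. Hence $\mathcal F(g_\xi)$ is a Riesz basis exactly for $-1/2<\xi<1/2$, which is (2) and also shows $T_{g_\xi}$ is not invertible when $\xi\ge1/2$. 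For (3): let $\xi=-1/2+n$, $n\ge0$. As an $L^\infty$ function on the circle, $g_\xi$ has a single jump at $z=-1$ of modulus $|e^{i\pi\xi}-e^{-i\pi\xi}|=2=2\|g_\xi\|_\infty$ (the one-sided limits are antipodal); since the essential norm of a Hankel operator with piecewise continuous symbol equals half the largest jump, $\|H_{g_\xi}\|=\operatorname{dist}(g_\xi,H^\infty)=1$, and the Brown--Halmos identity $\|T_{g_\xi}f\|^2=\|f\|^2-\|H_{g_\xi}f\|^2$ forces $\inf_{\|f\|=1}\|T_{g_\xi}f\|^2=1-\|H_{g_\xi}\|^2=0$, so $T_{g_\xi}$ is not bounded below; with the injectivity above this is (3). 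For (4): let $\xi>1/2$ with $\xi\notin\tfrac12+\Z$, and write $\xi=m+\xi_0$ with $m\ge1$ an integer and $\xi_0\in(-1/2,1/2)$. Since $z^m\in H^\infty$, $T_{g_\xi}=T_{g_{\xi_0}\cdot z^m}=T_{g_{\xi_0}}S^m$ with $S=T_z$ the unilateral shift; by (2) $T_{g_{\xi_0}}$ is invertible while $S^m$ is an isometry with range of codimension $m$, so $T_{g_\xi}$ is injective and bounded below but has range of codimension $m\ge1$, hence is not surjective: this is (4). (The non-half-integer case of (1) also follows this way, from $T_{g_\xi}=(S^*)^mT_{g_{\xi_0}}$ with $\xi=\xi_0-m$, $S^*$ surjective and non-injective.)

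\medskip
\noindent\emph{Main obstacle.} The delicate points are two. First, the passage ``$G_\xi\notin PW_\pi\Rightarrow\Lambda_\xi$ a uniqueness set'' is the borderline, critical-density case of Levinson/Beurling--Malliavin completeness theory, and the Phragm\'en--Lindel\"of step needs the sharp lower bound for $|G_\xi|$ away from its zeros together with exact density $1$ and separation. Second, Part (2) requires the \emph{correct} form of the Pavlov criterion: the relevant $(A_2)$ weight is the squared modulus of a Hermite--Biehler majorant of $G_\xi$ (equivalently $|G_\xi(\cdot+i)|^2$, not $|G_\xi(\cdot)|^2$, which vanishes on $\Lambda_\xi$), and one must check that the power weight $(1+x^2)^\xi$ is the only obstruction to $(A_2)$. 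The remaining ingredients --- the $\Gamma$-asymptotics, the Toeplitz shift identity, the essential-norm formula for Hankel operators with one jump, and the Brown--Halmos identity --- are standard.
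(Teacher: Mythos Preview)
Your argument is correct and reaches the same conclusions, but it follows a substantially different path from the paper's own proof. The paper also reduces to the exponential system $\mathcal F(g_\xi)$ via Theorem~\ref{thmain1}, but then first applies the unitary multiplication $f\mapsto e^{-i\pi\xi x}f$ to symmetrize the frequency set to $\Gamma_\xi=\{n-\xi/2:n\ge0\}\cup\{n+\xi/2:n<0\}$ and invokes classical non-harmonic Fourier series results directly: for $(1)$ it exhibits Levinson's explicit annihilator $F_t(x)=\sin(\pi x)\cos^{2t-1}(\pi x)$ with $t=-\xi/2$ (this is the inverse Fourier transform of your $G_\xi$, so the two constructions are dual); for $(2)$ it simply applies the Kadec $1/4$ theorem with $L=|\xi|/2<1/4$, which is far lighter than your Pavlov/$A_2$ machinery; for what you call $(4)$ it uses the set-theoretic inclusion $\mathcal F(g_{\xi_0})\subsetneq\mathcal F(g_\xi)$, which is exactly the frame-side shadow of your Toeplitz factorization $T_{g_\xi}=T_{g_{\xi_0}}S^m$; and for what you call $(3)$ it cites the Redheffer--Young theorem that $E(\Gamma_{-1/2})$ is exact but not a Riesz basis, then propagates to $\xi=-1/2+n$ by a frame-removal chain argument, whereas you compute $\|H_{g_\xi}\|=1$ from the antipodal jump and use $\|T_\phi f\|^2=\|f\|^2-\|H_\phi f\|^2$. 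The paper's route is more elementary and essentially reference-driven; your route is more structural and operator-theoretic, and in particular your shift factorization in $(4)$ yields the extra information that the cokernel of $T_{g_\xi}$ has dimension exactly $m$. The two points you flag as delicate---the precise form of the Pavlov $(A_2)$ criterion for real separated sequences and the half-jump essential-norm formula for Hankel operators with a single discontinuity---do require careful citation, but both are standard and the logic is sound.
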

This theorem will follow directly from Theorem \ref{th4} and Theorem \ref{thmain1}. These results demonstrate an elegant interaction between windowed exponentials and Toeplitz operators. 
 
\medskip

We organize our paper as follows: In Section 2, we will give some notations and preliminaries for the Toeplitz operators required for the rest of the papers. We will then prove Theorem \ref{thmain1} in Section 3 followed by classification of real-valued windows and complex exponential windows for ${\mathcal F}(g)$ to be a frame/basis.  In Section 3, we will consider the applications of our classification in dynamical sampling and nonuniform derivative sampling.
We conclude our paper in Section 4.

\section{Preliminaries and Notation}
We now introduce some notation and preliminaries that are useful in the following sections.  Let $A: H\rightarrow H$ be a bounded linear operator on a  Hilbert space $H$. We use  $\sigma(A)$ to denote the spectrum of $A$ which is the set of complex numbers $\lambda$ such that $A-\lambda I$ is not invertible. $\sigma(A)$ contains several types of spectra; $\sigma_p(A)$ denotes the {\it point spectrum} of $A$ consisting of all $\lambda$ such that $A-\lambda I$ is not injective; $ \sigma_{ap}(A)$ denotes the {\it approximate point spectrum} consisting of all $\lambda$ such that  there exists $f_n\in H$ with $\|f_n\|=1$ such that $\|(A-\lambda I)f_n\|\rightarrow 0$; $\sigma_c(A)$ denotes the {\it compression spectrum} consisting of points  $\lambda$ such that $A-\lambda I$ does not have a dense range.  For every bounded operator $A$, we have 
\begin{itemize}
\item $\sigma(A)=\sigma_{ap}(A) \cup \sigma_c(A)$, $\sigma_{ap}(A) \cap  \sigma_c(A)$ may not be empty. 
\item $\sigma_{p}(A) \subset \sigma_{ap}(A)$.
\item the boundary of $\sigma(A)$ denoted by $\partial \sigma(A) \subset \sigma(A)$.  
\end{itemize}

For $\phi\in L^\infty$, the {\it essential range} of $\phi$ is defined as
$$
\mbox{essran} (\phi) : = \{y: m(\{x:|\phi(x)-y|<\epsilon \})>0 \ \mbox{for all} \ \epsilon>0\}.
$$
The {\it essential infimum} and {\it essential supremum} of a real-valued function $\phi$ are defined as
$$
\mbox{essinf} (\phi) : = \inf \mbox{essran} (\phi), \ \mbox{esssup} (\phi) : = \sup \mbox{essran} (\phi).
$$

The following theorem is useful in proving. our theorems, and can be found in \cite{martinez2007introduction}.

\begin{theorem}\label{thMR}
Let $T_{\phi}$ be the Toeplitz operator associated with the $L^{\infty}$ function $\phi$. Then

\medskip

\begin{enumerate}
\item ${\mbox essran}(\phi)\subset \sigma_{ap}(T_{\phi})$.
\item (Coburn alternative) $T_{\phi}$ or $T_{\overline{\phi}}$ is injective. In particular, if $\phi$ is real-valued, then $T_{\phi} $ is injective.
\item Suppose that $\phi$ is real-valued. Then $\sigma(T_{\phi}) = [\mbox{essinf}(\phi),\mbox{esssup}(\phi)]$. Moreover, for all $\lambda$ in the open interval $ (\mbox{essinf}(\phi),\mbox{esssup}(\phi))$,  $T_{\phi}-\lambda I$ is not surjective.
\end{enumerate}
\end{theorem}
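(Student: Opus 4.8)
The plan is to treat the three assertions in turn, using only the definition $T_\phi f = P_+(\phi f)$, the contraction $\|P_+\|\le 1$, linearity of the symbol map (so that $T_\phi-\lambda I = T_{\phi-\lambda}$), and the norm identity $\|T_\phi\|=\|\phi\|_\infty$ recorded above.

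\textbf{Part (1).} Fix $\lambda_0\in\mbox{essran}(\phi)$. For each $\epsilon>0$ the set $E_\epsilon=\{x:|\phi(x)-\lambda_0|<\epsilon\}$ has positive measure, so I can choose a unit vector $u_\epsilon\in L^2[-1/2,1/2]$ supported on $E_\epsilon$, giving $\|(\phi-\lambda_0)u_\epsilon\|_2\le\epsilon$. The only obstruction is that $u_\epsilon\notin\tilde H^2$, and I remove it by modulation: set $f_N=P_+(e^{2\pi i Nx}u_\epsilon)$. Since multiplication by $e^{2\pi i Nx}$ shifts the Fourier support of $u_\epsilon$ upward by $N$, the negative-frequency part $\|(I-P_+)(e^{2\pi i Nx}u_\epsilon)\|_2^2=\sum_{k\le -1-N}|\widehat{u_\epsilon}(k)|^2\to 0$, so $\|f_N-e^{2\pi i Nx}u_\epsilon\|_2\to 0$ and $\|f_N\|_2\to 1$ as $N\to\infty$. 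Then
$$\|(T_\phi-\lambda_0 I)f_N\|=\|P_+((\phi-\lambda_0)f_N)\|\le\|(\phi-\lambda_0)f_N\|_2\le\|\phi-\lambda_0\|_\infty\,\|f_N-e^{2\pi i Nx}u_\epsilon\|_2+\|(\phi-\lambda_0)u_\epsilon\|_2,$$
where the first summand tends to $0$ and the second is at most $\epsilon$. A diagonal choice of $\epsilon\to 0$ and $N\to\infty$ produces normalized vectors realizing $\lambda_0\in\sigma_{ap}(T_\phi)$.

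\textbf{Part (2).} Assume $\phi\ne 0$ and suppose, for contradiction, $f\in\ker T_\phi$ and $g\in\ker T_{\overline\phi}$ are both nonzero. From $P_+(\phi f)=0$ the Fourier support of $\phi f$ lies in $\{n\le -1\}$, and from $P_+(\overline\phi g)=0$ that of $\overline\phi g$ lies in $\{n\le -1\}$. Consider $h=\phi f\overline g\in L^1$. Writing $h=(\phi f)\cdot\overline g$ with $\overline g$ supported in $\{n\le 0\}$ and convolving supports gives $\mathrm{supp}\,\widehat h\subseteq\{n\le -1\}$; writing the same $h=f\cdot\overline{(\overline\phi g)}$ with $f$ supported in $\{n\ge 0\}$ and $\overline{(\overline\phi g)}=\phi\overline g$ supported in $\{n\ge 1\}$ gives $\mathrm{supp}\,\widehat h\subseteq\{n\ge 1\}$. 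Hence $h\equiv 0$. The decisive input is the Hardy-space fact that a nonzero element of $\tilde H^2$ vanishes only on a null set (equivalently, $\log|f|\in L^1$); thus $f\overline g\ne 0$ a.e., forcing $\phi=0$ a.e., a contradiction. Therefore at least one kernel is trivial, and when $\phi$ is real $T_{\overline\phi}=T_\phi$, so $T_\phi$ itself is injective.

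\textbf{Part (3).} For real $\phi$ the adjoint computation $\langle T_\phi f,h\rangle=\langle\phi f,h\rangle=\langle f,T_{\overline\phi}h\rangle$ shows $T_\phi^\ast=T_{\overline\phi}=T_\phi$, so $T_\phi$ is self-adjoint; consequently $\sigma(T_\phi)\subset\R$ and $\sigma(T_\phi)=\sigma_{ap}(T_\phi)$ (the compression spectrum of a self-adjoint operator is already approximate point spectrum). Writing $m=\mbox{essinf}(\phi)$, $M=\mbox{esssup}(\phi)$, the identity $\langle T_\phi f,f\rangle=\int\phi|f|^2\in[m\|f\|^2,M\|f\|^2]$ confines the numerical range to $[m,M]$, whence $\sigma(T_\phi)\subset[m,M]$; and Part (1) places the endpoints $m,M\in\mbox{essran}(\phi)$ into $\sigma(T_\phi)$. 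To fill the open gap $(m,M)$ I invoke the connectedness of the spectrum of any Toeplitz operator (Widom's theorem): a connected subset of $[m,M]$ containing both endpoints equals $[m,M]$, giving $\sigma(T_\phi)=[m,M]$. Finally, for $\lambda\in(m,M)$ the operator $T_{\phi-\lambda}$ has real symbol, hence is injective by Part (2), yet $\lambda\in\sigma(T_\phi)$ means it is not invertible; by the bounded inverse theorem an injective non-invertible bounded operator cannot be surjective, so $T_\phi-\lambda I$ fails to be surjective, as claimed.

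I expect two genuinely non-elementary inputs to be the crux. In Part (2) it is the vanishing-only-on-null-sets property of Hardy functions, which is what converts the frequency-support annihilation $\phi f\overline g=0$ into a contradiction. In Part (3) it is the connectedness of the Toeplitz spectrum; this is unavoidable precisely because $\mbox{essran}(\phi)$ may omit an entire subinterval of $(m,M)$, so neither the numerical range nor Part (1) alone can recover those interior points.
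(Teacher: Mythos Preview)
The paper does not prove this theorem; it is quoted as a known result from Mart\'{i}nez-Avenda\~{n}o--Rosenthal, with only the remark that the ``moreover'' clause in (3) can be read off from the proof of their Theorem~3.3.15. So there is no in-paper argument to compare against.

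Your argument is correct and follows the standard textbook line. In (1) the shift-modulation trick producing approximate eigenvectors in $\tilde H^2$ from $L^2$ unit vectors supported on $\{|\phi-\lambda_0|<\epsilon\}$ is exactly the usual proof. In (2) the two factorizations of $\phi f\overline g$ and the resulting Fourier-support contradiction is Coburn's original argument; your ``decisive input'' (nonzero $\tilde H^2$ functions vanish only on null sets, via $\log|f|\in L^1$) is precisely the F.~and~M.~Riesz ingredient that makes it work. In (3) self-adjointness, the numerical-range bound $\sigma(T_\phi)\subset[m,M]$, and the inclusion of the endpoints via (1) are all correct, as is the final deduction of non-surjectivity on $(m,M)$ from Coburn injectivity plus the open mapping theorem.

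The only remark is that invoking Widom's connectedness theorem for general Toeplitz operators to fill the open interval $(m,M)$ is valid but heavier than necessary in the self-adjoint case; the Hartman--Wintner argument in the cited reference obtains the interval description for real symbols more directly, without the full Widom machinery. Either route yields the statement, and your identification of connectedness as the genuine content beyond (1) and the numerical range is on point.
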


The last part of the theorem follows from the proof in \cite[Theorem 3.3.15]{martinez2007introduction}.  Finally, we mention the general analytic invertibility theorem for $T_{\phi}$ due to Widom-Devinatz (see e.g., \cite[Theorem 2.23]{bottcher2013analysis}).

 \medskip

 \begin{theorem}\label{WD} [Widom-Devinatz]
 $T_{\phi}$ is invertible if and only if $\phi^{-1}\in L^{\infty}[-1/2,1/2]$ and
 \begin{equation}\label{H-S1}
 \frac{\phi}{|\phi|} = e^{i(u^h+v+c)} \ \mbox{a.e. on} \ [-1/2,1/2]
 \end{equation}
 where $c\in{\mathbb R}$, $u,v$ are bounded real-valued functions, $\|v\|_{\infty}<\pi/2$ and ${u}^h$ is the harmonic conjugate of $u$, i.e.,  it satisfies the property that  $u+i{u}^h$ can be extended analytic in the unit disk.
 \end{theorem}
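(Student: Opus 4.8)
The plan is to prove Theorem~\ref{th0.2} by first converting each spectral assertion about $T_{g_\xi}$ into a statement about the windowed system $\mathcal F(g_\xi)$ through the dictionary of Theorem~\ref{thmain1}, and then resolving that statement by classical non-harmonic Fourier series. Since $|g_\xi|\equiv 1$, the operator $T_{g_\xi}$ is automatically bounded with $\|T_{g_\xi}\|=1$, so Theorem~\ref{thmain1} reads: $T_{g_\xi}$ is injective iff $\mathcal F(g_\xi)$ is complete, $T_{g_\xi}$ is bounded below iff $\mathcal F(g_\xi)$ is a frame, and $T_{g_\xi}$ is invertible iff $\mathcal F(g_\xi)$ is a Riesz basis; I also record the elementary fact that for a bounded operator ``bounded below but not invertible'' is the same as ``bounded below but not surjective.'' Writing out the system, $\mathcal F(g_\xi)=\{e^{2\pi i\lambda x}:\lambda\in\Lambda_\xi\}$ with $\Lambda_\xi=\mathbb{N}_0\cup(\xi+\mathbb{N}^-)=\{0,1,2,\dots\}\cup\{\xi-1,\xi-2,\dots\}$, a density-one perturbation of the integer lattice. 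Via the Paley--Wiener isometry $h\mapsto H(\nu)=\int_{-1/2}^{1/2}h(x)e^{-2\pi i\nu x}\,dx$, completeness of $\mathcal F(g_\xi)$ is equivalent to the nonexistence of a nonzero entire function $H$ of exponential type $\pi$, square-integrable on $\mathbb R$, that vanishes on $\Lambda_\xi$.

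The engine of the proof is an explicit canonical annihilator. I would set
\[
H_\xi(\nu)=\sin(\pi\nu)\,\frac{\Gamma(\nu+1)}{\Gamma(\nu+1-\xi)},
\]
and check that it is entire of exponential type $\pi$, that its zero set is exactly $\Lambda_\xi$ (the poles of $\Gamma(\nu+1)$ cancel the zeros of $\sin(\pi\nu)$ at the negative integers, while the poles of $\Gamma(\nu+1-\xi)$ install zeros at $\xi-1,\xi-2,\dots$), and that by Stirling $|H_\xi(\nu)|\asymp|\nu|^{\xi}$ on the real axis. Hence $H_\xi\in L^2(\mathbb R)$ iff $2\xi<-1$, i.e. iff $\xi<-1/2$. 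This yields part (1) at once: for every $\xi<-1/2$ the function $H_\xi$ is a genuine nonzero annihilator, so $\mathcal F(g_\xi)$ is not complete and $T_{g_\xi}$ is not injective. The same function governs minimality: removing one node $\lambda_0$ replaces the extremal candidate by $H_\xi(\nu)/(\nu-\lambda_0)$, of size $|\nu|^{\xi-1}$, which lies in $L^2$ iff $\xi<1/2$; thus (granting the extremality principle discussed below) for $\xi>1/2$ no single exponential can be deleted without preserving completeness, so $\mathcal F(g_\xi)$ is not minimal and cannot be a Riesz basis.

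For the positive results I would argue as follows. When $-1/2<\xi<1/2$ I apply Widom--Devinatz (Theorem~\ref{WD}) directly: $g_\xi^{-1}=e^{-2\pi i\xi x}\in L^\infty$, and taking $u\equiv0$, $c=0$, $v(x)=2\pi\xi x$ gives $g_\xi/|g_\xi|=e^{iv}$ with $\|v\|_\infty=\pi|\xi|<\pi/2$ precisely because $|\xi|<1/2$; hence $T_{g_\xi}$ is invertible, which is part (2). For the remaining ranges I pass through Fredholm theory for the piecewise continuous symbol $g_\xi$, whose only discontinuity sits at $x=\pm1/2$ with the jump bridged by the chord joining $e^{-\pi i\xi}$ to $e^{\pi i\xi}$; a direct computation shows the augmented range of $g_\xi$ passes through the origin exactly when $\xi\in\tfrac12+\mathbb Z$, so off the half-integers $T_{g_\xi}$ is Fredholm with index equal to minus the winding number, which is $0$ on $(-1/2,1/2)$ and $\le-1$ for $\xi>1/2$. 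Combining a negative index with the Coburn alternative (Theorem~\ref{thMR}(2)) and the relation $T_{\overline{g_\xi}}=T_{g_\xi}^*$, the index forces $\ker T_{\overline{g_\xi}}\neq0$, so Coburn makes $\ker T_{g_\xi}$ trivial; being Fredholm, $T_{g_\xi}$ has closed range and is therefore bounded below, while the negative index makes it non-surjective. Through the dictionary this is exactly part (4): a frame that is not a Riesz basis.

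The hard part will be the half-integer cases $\xi=-1/2+n$, $n\ge0$ of part (3), where the symbol's jump passes through $0$, $T_{g_\xi}$ is not even semi-Fredholm, and both the index argument and Widom--Devinatz are silent. Here $H_\xi$ sits exactly on the borderline $|\nu|^{\xi}$ with $\xi\in\{-1/2,1/2,3/2,\dots\}$, so the canonical annihilator just fails to be square-integrable (at $\xi=-1/2$ it decays like $|\nu|^{-1/2}$, log-divergent in $L^2$). To conclude injectivity I must upgrade this to a genuine nonexistence statement, namely that \emph{any} type-$\pi$ function vanishing on the critical, density-one set $\Lambda_\xi$ differs from $H_\xi$ only by a factor that cannot improve the real-line decay; this is the standard but delicate uniqueness of the canonical product at critical density, and is where the non-harmonic Fourier series input (Levinson--Paley--Wiener completeness) enters, and it also retroactively justifies the minimality threshold used above. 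Finally, since $0$ lies in the essential spectrum and the operator is not semi-Fredholm, its range is not closed, so $T_{g_\xi}$ is injective but not bounded below; equivalently, truncations of the borderline $H_\xi$ produce unit vectors whose frame sums tend to $0$, exhibiting the failure of the lower frame bound. Assembling parts (1)--(4) through Theorem~\ref{thmain1} then completes the proof.
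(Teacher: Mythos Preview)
Your proposal does not address the stated theorem. The statement given is Theorem~\ref{WD}, the Widom--Devinatz theorem, which the paper does not prove at all: it is quoted from \cite[Theorem 2.23]{bottcher2013analysis} as a classical preliminary. You have instead written a proof sketch for Theorem~\ref{th0.2}, the spectral classification of $T_{g_\xi}$; indeed you invoke Theorem~\ref{WD} as a black box in your argument for the range $-1/2<\xi<1/2$, so you are treating it as known rather than as the target. There is thus nothing here to compare against a proof the paper never gives.

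If your intent was Theorem~\ref{th0.2}, then your overall strategy does match the paper's: translate via Theorem~\ref{thmain1} into the completeness/frame/basis trichotomy for $\mathcal F(g_\xi)$ and resolve that by non-harmonic Fourier series (this is Theorem~\ref{th4}). The specific tools differ. For (1) the paper constructs the annihilator as $F_t(x)=\sin(\pi x)\cos^{2t-1}(\pi x)$ following Levinson rather than your Gamma-quotient; for (2) it applies Kadec's $1/4$-theorem after the unitary shift $f\mapsto e^{-\pi i\xi x}f$ rather than Widom--Devinatz; for (4) it uses the elementary inclusion $\mathcal F(g_{\xi_0})\subsetneq\mathcal F(g_\xi)$ with $\xi=\xi_0+n$, $|\xi_0|<1/2$, instead of your Fredholm-index argument; and for the critical case (3) it cites Redheffer--Young for $\xi=-1/2$ and then propagates up the chain $\mathcal F(g_{-1/2})\subset\mathcal F(g_{1/2})\subset\cdots$ via the ``frame minus finitely many vectors is a frame or incomplete'' principle. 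Your Fredholm route for (4) is a clean operator-theoretic alternative, but your handling of (3) is noticeably more delicate than the paper's, which sidesteps the extremality/uniqueness-at-critical-density issue you flag by simply quoting a known result.
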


 Condition  in (\ref{H-S1}) is commonly referred as {\it Helson-Szeg\"{o} condition}.

\section{Frame properties of ${\mathcal F}(g)$}
In this section, we will use the spectral properties of Toeplitz operators to characterize the frame properties of ${\mathcal F}(g)$. Recall that
$$
{\mathcal F}(g) = {\mathcal E}(\chi_{[-1/2,1/2]}, {\mathbb N}_0)\cup {\mathcal E}(g, {\mathbb N}^{-}).
$$
 ${\mathcal F}(g)$ forms a frame if and only if there exist $0<A\le B<\infty$ such that
$$
A\|f\|^2\le \sum_{n=-\infty}^{-1}|\langle f,ge_n\rangle|^2+ \sum_{n=0}^{\infty}|\langle f,e_n\rangle|^2\le B\|f\|^2,
$$
for all $f\in L^2[-1/2,1/2]$. It forms a {\it Bessel sequence} if  the second inequality above holds.
\medskip

We now  consider the {\it analysis operator} of  ${\mathcal F}(g)$, $
\Phi_g: L^2\left[-\frac12,\frac12\right] \longrightarrow \ell^2({\mathbb Z})$ defined by
$$
\Phi_g f =  \left(\cdots,\langle f,ge_{-2}\rangle,\langle f,ge_{-1}\rangle,\langle f,e_0\rangle, \langle f,e_1\rangle,\langle f,ge_2\rangle\cdots\right).
$$
The following theorem is well-known in the frame theory literature (See e.g \cite[Theorem 8.29 and 8.32]{ heil2010basis}).

\begin{theorem}\label{th1}
\begin{enumerate}
\item ${\mathcal F}(g)$ is complete if and only if $\Phi_g$ is injective.
\item ${\mathcal F}(g)$ is a frame if and only if $\Phi_g$ is both norm-bounded above and below.
\item ${\mathcal F}(g)$ is a Riesz basis if and only if $\Phi_g$ is surjective and both norm-bounded above and below.
\end{enumerate}
\end{theorem}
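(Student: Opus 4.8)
The plan is to obtain all three equivalences from the standard dictionary between a countable system in a Hilbert space and its analysis operator, applied to $\Phi_g$, together with its adjoint $\Psi_g := \Phi_g^{*}$ (the synthesis operator), which formally sends $(c_n)_{n\in\Z}$ to $\sum_{n\ge 0}c_n e_n + \sum_{n<0}c_n\, g e_n$. Throughout write ${\mathcal F}(g) = \{\varphi_n : n\in\Z\}$ with $\varphi_n = e_n$ for $n\ge 0$ and $\varphi_n = g e_n$ for $n<0$, so that $\Phi_g f = (\langle f,\varphi_n\rangle)_{n\in\Z}$.

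For part (1), I would view $\Phi_g$ merely as a map into sequences (so that no Bessel property is required): $\Phi_g f = 0$ holds exactly when $\langle f,\varphi_n\rangle = 0$ for all $n\in\Z$, i.e.\ when $f\perp{\mathcal F}(g)$. Hence $\ker\Phi_g = ({\mathcal F}(g))^{\perp} = (\overline{\operatorname{span}}\,{\mathcal F}(g))^{\perp}$, and $\Phi_g$ is injective iff $\overline{\operatorname{span}}\,{\mathcal F}(g) = L^2[-1/2,1/2]$, which is the definition of completeness. For part (2), I would record the identity $\|\Phi_g f\|_{\ell^2(\Z)}^2 = \sum_{n\ge 0}|\langle f,e_n\rangle|^2 + \sum_{n<0}|\langle f,g e_n\rangle|^2$; consequently the frame inequality $A\|f\|^2 \le \|\Phi_g f\|^2 \le B\|f\|^2$ valid for all $f$ is word for word the statement that $\Phi_g$ is a bounded operator into $\ell^2(\Z)$ which is bounded below, where "bounded above" already forces $\Phi_g$ to take values in $\ell^2(\Z)$, i.e.\ the Bessel condition.

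For part (3), suppose first that ${\mathcal F}(g)$ is a frame. By part (2), $\Phi_g$ is bounded above and below, so $\operatorname{ran}\Phi_g$ is closed. By definition a Riesz basis is a frame in which every $f$ has a \emph{unique} expansion $f = \sum_n c_n \varphi_n$ with $(c_n)\in\ell^2(\Z)$; existence of such an expansion is automatic via the canonical dual frame, and such an expansion is precisely $f = \Psi_g(c_n)$. Thus uniqueness for all $f$ is equivalent to injectivity of $\Psi_g = \Phi_g^{*}$, equivalently to $\operatorname{ran}\Phi_g$ being dense in $\ell^2(\Z)$, equivalently --- since that range is closed --- to $\Phi_g$ being surjective. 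Running this chain in reverse gives the converse, completing part (3). The only place that needs real care is exactly this chain: one must use that the frame lower bound is what makes $\operatorname{ran}\Phi_g$ closed (so that "dense range" upgrades to "onto") and must identify "unique expansion" with injectivity of the synthesis operator rather than a weaker $\omega$-independence notion; everything else is immediate from the definitions. Alternatively, all three parts may simply be quoted from \cite[Theorems 8.29 and 8.32]{heil2010basis}.
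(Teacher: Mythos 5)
Your proposal is correct. Note that the paper does not actually prove Theorem \ref{th1}: it is stated as a standard fact and attributed to the frame-theory literature (Heil's \emph{Basis Theory Primer}, Theorems 8.29 and 8.32), which is exactly the fallback you mention at the end. Your self-contained argument is the standard proof behind that citation and is sound: part (1) is the identification $\ker\Phi_g=(\overline{\operatorname{span}}\,{\mathcal F}(g))^{\perp}$, part (2) is definitional, and in part (3) you correctly isolate the two points that need care --- that the lower frame bound makes $\operatorname{ran}\Phi_g$ closed, so density of the range (equivalently, injectivity of the synthesis operator $\Phi_g^{*}$, which encodes uniqueness of $\ell^2$-coefficient expansions) upgrades to surjectivity, with existence of expansions supplied by the canonical dual frame. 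Since both directions of (3) assume boundedness of $\Phi_g$ before invoking its adjoint, there is no gap there either; your proof can stand in place of the citation.
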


Note that $L^2[-\frac12,\frac12] $ is unitarily equivalent to $\ell^2({\mathbb Z})$ via the Fourier coefficients.
$$
f\in L^2\left[-\frac12,\frac12\right]  \longleftrightarrow \{\widehat{f}(n)\}_{n\in{\mathbb Z}}.
$$
We denote $e_n(x)= e^{2\pi i n x}$. Suppose that $g = \sum_{n=-\infty}^{+\infty} b_n e_n$. Then
$$
\overline{g} = \sum_{n=-\infty}^{+\infty} \overline{b_{-n}} e_n.
$$
Hence,
$$
\langle f,ge_{n}\rangle = \langle f\overline{g},e_{n}\rangle = \sum_{k=-\infty}^{+\infty}  \overline{b_{-(n-k)}}\widehat{f}(k).
$$
We also define the function
$$
\widetilde{g}(x) = \overline{g(-x)} = \sum_{n=-\infty}^{\infty}\overline{b_{n}}e_{n}(x).
$$
Suppose that we identify ${\mathbb Z} = {\mathbb N}^{-}\oplus{\mathbb N}_0$ and order ${\mathbb N}^{-} = \{-1,-2,...\}$ and ${\mathbb N}_0 = \{0,1,2,...\}$. We can think of $\Phi_g$ as a mapping from $\ell^2(\mathbb{ N}^{-}) \oplus \ell^2(\mathbb{ N}_{0} ) $ to $\ell^2(\mathbb{ N}^{-}) \oplus \ell^2(\mathbb{ N}_{0} ) $, the map becomes
$$
\Phi_g(a_{-1}, \cdots, a_0,a_1,\cdots) = (\sum_{k=-\infty}^{+\infty}  \overline{b_{k+1}}a_k,\sum_{k=-\infty}^{+\infty}  \overline{b_{k+2}}a_k,\cdots,a_0,a_1,\cdots)
$$
Hence, $\Phi_g$ admits a matrix representation of the form
\begin{equation}\label{rep_Phi}
\left[
  \begin{array}{ccc}
    T_{\widetilde{g}} & | & H_{\overline{g}} \\
    -- & | & -- \\
    O & |& I \\
  \end{array}
\right]
\end{equation}
where $T_{\widetilde{g}}: \ell^2({\mathbb N}^-)\longrightarrow \ell^2({\mathbb N}^-)$ and $H_{ \overline{g}}: \ell^2({\mathbb N}_0)\longrightarrow\ell^2({\mathbb N}^{-})$ with the matrix representation
$$
T_{\widetilde{g}} =
\left[
  \begin{array}{cccc}
    \overline{b_0}&\overline{b_{-1}}&\overline{b_{-2}} &\cdots\\
    \overline{b_1}&\overline{b_0} & \overline{b_{-1}} &\ddots \\
    \overline{b_2}&\overline{b_{1}}& \ddots & \ddots \\
    \vdots&\ddots & \ddots&  \\
  \end{array}
\right], \  H_{\overline{g}}  = \left[
  \begin{array}{cccc}
    \overline{b_{1}}&\overline{b_2} & \overline{b_3}& \cdots  \\
    \overline{b_2}&\overline{b_{3}}& \overline{b_4} & \ddots \\
    \overline{b_3}&\overline{b_{4}} & \ddots& \ddots  \\
    \vdots&\ddots& \ddots &
  \end{array}
\right].
$$

With a slight abuse of notation, we identify $\ell^2({\mathbb N}^{-})$  and  $\ell^2({\mathbb N}_0)$ by an obvious isometric isomorphism, so that $T_{\widetilde{g}}$ is a Toeplitz operator. We have a simple lemma concerning the spectra of $T_{\widetilde{g}}$ and $T_g$. 


\medskip

\begin{Lem}\label{lem1+}
$\sigma_p(T_{\widetilde{g}}) = \overline{\sigma_p(T_{{g}})}$, $\sigma_{ap}(T_{\widetilde{g}}) = \overline{\sigma_{ap}(T_{{g}})}$ and $\sigma_c(T_{\widetilde{g}}) = \overline{\sigma_c(T_{{g}})}$.

\smallskip

\end{Lem}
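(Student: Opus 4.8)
The plan is to exhibit an explicit unitary operator intertwining $T_{\widetilde g}$ and $T_{\overline g}$, and then to relate the spectra of $T_{\overline g}$ to those of $T_g$ by complex conjugation. Recall that $\widetilde g(x) = \overline{g(-x)}$, so its Fourier coefficients are $\widehat{\widetilde g}(n) = \overline{b_n} = \overline{\widehat g(n)}$, whereas $\overline g$ has Fourier coefficients $\overline{b_{-n}}$. The natural device is the ``flip'' unitary $J\colon \ell^2(\N_0)\to\ell^2(\N_0)$ given by coordinatewise complex conjugation, $J(a_n)_n = (\overline{a_n})_n$; it is conjugate-linear and isometric. First I would check directly on matrix entries that $J T_g J = T_{\overline g}$, since conjugating the $(j,k)$ entry $\widehat g(k-j) = b_{k-j}$ gives $\overline{b_{k-j}} = \widehat{\overline g}(k-j)$. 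Because $J$ is conjugate-linear, conjugation by $J$ sends $T_g - \lambda I$ to $T_{\overline g} - \overline\lambda I$, which immediately yields $\sigma_p(T_{\overline g}) = \overline{\sigma_p(T_g)}$, $\sigma_{ap}(T_{\overline g}) = \overline{\sigma_{ap}(T_g)}$ and $\sigma_c(T_{\overline g}) = \overline{\sigma_c(T_g)}$ (injectivity, the existence of approximate eigenvectors, and density of range are all preserved under a conjugate-linear isometry).

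Next I would handle the passage from $T_{\overline g}$ to $T_{\widetilde g}$. Here the relevant operator is the ``reversal'' isometry, but on $\ell^2(\N_0)$ there is no order-reversing bijection; instead the clean statement is that $T_{\widetilde g} = T_{\overline g}^{\,*}$. Indeed the adjoint of a Toeplitz operator with symbol $\phi$ is the Toeplitz operator with symbol $\overline\phi$, and $\overline{\,\overline g\,}$ has Fourier coefficients $b_{-(k-j)}$ evaluated with the transpose index convention, which matches $\widehat{\widetilde g}(n) = \overline{b_n}$ after taking the adjoint (transpose-conjugate) of the matrix for $T_{\overline g}$. Once $T_{\widetilde g} = (T_{\overline g})^*$ is established, I invoke the standard relations between the spectra of an operator and of its adjoint: $\sigma_p(A^*)$, $\sigma_{ap}(A^*)$, $\sigma_c(A^*)$ are governed by $\sigma_c(A)$, $\sigma(A)$, $\sigma_p(A)$ respectively, which is \emph{not} what we want here. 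So in fact combining a flip with an adjoint is the wrong route.

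The correct and cleanest route, which I would actually pursue, is to note that $T_{\widetilde g}$ is \emph{directly} unitarily equivalent to $\overline{T_g}$ via the conjugation $J$ alone, with no adjoint needed: from $\widehat{\widetilde g}(n) = \overline{b_n} = \overline{\widehat g(n)}$ we get that the matrix of $T_{\widetilde g}$ is the entrywise complex conjugate of the matrix of $T_g$, i.e. $T_{\widetilde g} = J T_g J$ on the nose. Then $T_{\widetilde g} - \lambda I = J(T_g - \overline\lambda I)J$, and since $J$ is a conjugate-linear isometric bijection, $T_{\widetilde g} - \lambda I$ is injective (resp. bounded below, resp. has dense range) if and only if $T_g - \overline\lambda I$ is. Reading off the three spectra gives exactly $\sigma_p(T_{\widetilde g}) = \overline{\sigma_p(T_g)}$, $\sigma_{ap}(T_{\widetilde g}) = \overline{\sigma_{ap}(T_g)}$, $\sigma_c(T_{\widetilde g}) = \overline{\sigma_c(T_g)}$. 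The only point requiring a little care — and the step I would treat most carefully — is verifying that conjugation by a \emph{conjugate-linear} isometry preserves each of the three spectral properties with the correct conjugation on $\lambda$, in particular that the compression spectrum (dense range) transforms the same way as the point spectrum rather than pairing with it; this follows because $J$ maps $\overline{\mathrm{ran}(A)}$ onto $\overline{\mathrm{ran}(JAJ)}$, but it is worth spelling out since adjoints are the usual source of the ``swap'' between $\sigma_p$ and $\sigma_c$ and we must confirm that no such swap occurs here.
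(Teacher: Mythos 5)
Your final argument is correct and is exactly the paper's proof: the paper disposes of the lemma with the single observation that $T_{\widetilde{g}}f=\overline{T_g\overline{f}}$ for all $f\in\ell^2(\N_0)$, which is precisely your identity $T_{\widetilde{g}}=JT_gJ$ with $J$ the coordinatewise conjugation, followed by the same remark that a conjugate-linear isometric involution transports injectivity, lower bounds, and density of range of $T_g-\overline{\lambda}I$ to $T_{\widetilde{g}}-\lambda I$ without any $\sigma_p/\sigma_c$ swap. One caveat before writing it up: delete the first paragraph's claim that $JT_gJ=T_{\overline{g}}$ and the spectral identities drawn from it --- entrywise conjugation of the matrix of $T_g$ yields $T_{\widetilde{g}}$, not $T_{\overline{g}}$ (indeed $T_{\overline{g}}=T_g^{*}$, for which point and compression spectra interchange, as the shift $g(x)=e^{2\pi i x}$ shows), so that detour is not merely superseded but false, while your third paragraph stands on its own.
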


\begin{proof}
This lemma follows from the a direct observation that for any $f = (f_1,f_2,...)$, $T_{\widetilde{g}} f = \overline{T_g \overline{f}}$.
\end{proof}

\medskip

We need the following general lemma concerning the boundedness of operators on $\ell^2({\mathbb Z})$ and $\ell^2({\mathbb N})$.

\smallskip

\begin{Lem}\label{main_lemma}
Let $\Phi: \ell^2({\mathbb Z})\to \ell^2({\mathbb Z})$ be a linear operator such that $\Phi$ has a matrix representation of the form
$$
\left[
  \begin{array}{ccc}
    A & | & B \\
    -- & | & -- \\
    O & |& I \\
  \end{array}
\right],
$$
where $A: \ell^2({\mathbb N}^{-})\to\ell^2({\mathbb N})$ and $B: \ell^2({\mathbb N}^{-})\to\ell^2({\mathbb N}_0)$ are bounded linear operator. Then
\begin{enumerate}
\item $\Phi$ is a bounded linear operator on $\ell^2({\mathbb Z})$.
\item Suppose that $A$ is normed bounded below on $\ell^2({\mathbb N}^-)$. Then $\Phi$ is also normed bounded below.
\end{enumerate}
\end{Lem}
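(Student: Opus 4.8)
The plan is to work with the orthogonal decomposition $\ell^2({\mathbb Z}) = \ell^2({\mathbb N}^-)\oplus\ell^2({\mathbb N}_0)$ and write a generic vector as $x = x_-\oplus x_+$. The block form of $\Phi$ then says exactly that $\Phi x = (Ax_-+Bx_+)\oplus x_+$, so that
$\|\Phi x\|^2 = \|Ax_-+Bx_+\|^2 + \|x_+\|^2$. Both assertions reduce to estimating this quantity — from above for (1) and from below for (2) — and $\Phi$ is linear by hypothesis, so only the norm bounds are at issue.

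For part (1) I would apply the triangle inequality followed by the elementary bound $(s+t)^2\le 2s^2+2t^2$ to get $\|Ax_-+Bx_+\|^2 \le 2\|A\|^2\|x_-\|^2 + 2\|B\|^2\|x_+\|^2$. Adding $\|x_+\|^2$ and using $\|x_-\|^2+\|x_+\|^2 = \|x\|^2$ gives $\|\Phi x\| \le (1+2\|A\|^2+2\|B\|^2)^{1/2}\|x\|$, so $\Phi$ is bounded.

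For part (2), let $c>0$ satisfy $\|Ay\|\ge c\|y\|$ for all $y\in\ell^2({\mathbb N}^-)$. The subtlety is that $Ax_-$ may nearly cancel $Bx_+$, so one cannot lower-bound the first block alone; instead I would split into two regimes by the size of $\|x_+\|$ relative to $\|x_-\|$. By the reverse triangle inequality, $\|Ax_-+Bx_+\| \ge c\|x_-\| - \|B\|\,\|x_+\|$. If $\|B\|=0$ one simply has $\|\Phi x\|^2 \ge c^2\|x_-\|^2+\|x_+\|^2 \ge \min(c^2,1)\|x\|^2$, so assume $\|B\|>0$ and set $\delta = c/(2\|B\|)$. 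In the regime $\|x_+\|\le\delta\|x_-\|$ the displayed inequality yields $\|Ax_-+Bx_+\|\ge \tfrac{c}{2}\|x_-\|$, while $\|x\|^2\le(1+\delta^2)\|x_-\|^2$, so $\|\Phi x\|^2 \ge \tfrac{c^2}{4}\|x_-\|^2 \ge \tfrac{c^2}{4(1+\delta^2)}\|x\|^2$. In the complementary regime $\|x_+\|>\delta\|x_-\|$ we have $\|x\|^2<(1+\delta^{-2})\|x_+\|^2$, and since the lower block of $\Phi$ is the identity, $\|\Phi x\|^2\ge\|x_+\|^2\ge\tfrac{1}{1+\delta^{-2}}\|x\|^2$. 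Taking the minimum of the two constants gives a uniform bound $\|\Phi x\|\ge c'\|x\|$ with $c' = \min\!\big(\tfrac{c^2}{4(1+\delta^2)},\tfrac{1}{1+\delta^{-2}}\big)^{1/2}>0$.

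I do not expect a genuine obstacle here: the argument is an elementary two-case estimate. The only point that needs care — and the reason the naive bound ``$\|\Phi x\|\ge\|$first block$\|$'' is insufficient — is the possible cancellation between $Ax_-$ and $Bx_+$; this is precisely what the case split on $\|x_+\|/\|x_-\|$ absorbs, using the identity block to control $\|x\|$ whenever $x_+$ is not small.
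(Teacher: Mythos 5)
Your proof is correct. Part (1) is essentially identical to the paper's argument: the same decomposition $f=(f^-,f^+)$, the bound $\|Af^-+Bf^+\|^2\le 2\|A\|^2\|f^-\|^2+2\|B\|^2\|f^+\|^2$, and adding the identity-block term. For part (2) you take a different route from the paper: the paper runs a single chain of inequalities, sacrificing half of the $\|f^+\|^2$ term to absorb $\|Bf^+\|^2$ and then using $(a^2+b^2)\ge\tfrac12(a+b)^2$ together with the triangle inequality to extract $\|Af^-\|^2$, whereas you split into two regimes according to whether $\|x_+\|\le \delta\|x_-\|$ or not (with $\delta=c/(2\|B\|)$), using the reverse triangle inequality in the first regime and the identity block alone in the second. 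Both arguments address the same essential point, namely the possible cancellation between $Ax_-$ and $Bx_+$, but your case split is arguably cleaner: it yields transparent explicit constants, sidesteps the somewhat delicate constant bookkeeping in the paper's chain (which as printed contains small slips, e.g.\ $\tfrac{1}{2c_2}$ where $\tfrac{1}{2c_2^2}$ is needed, and $c_1$ where $c_1^2$ should appear), and you also treat the degenerate case $\|B\|=0$ explicitly, which the paper implicitly ignores. The paper's one-line chain is shorter on the page, but your version is easier to verify and gives the same conclusion with a uniform lower bound.
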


\begin{proof}
\noindent (i). Let $f = (f^-,f^+)\in \ell^2({\mathbb N}^-)\oplus\ell^2({\mathbb N}_0)$. Then $\|f\|^2 = \|f^-\|^2+\|f^+\|^2$ and
$$
\begin{aligned}
\|\Phi f\|^2 = &\|A f^{-}+ Bf^{+}\|^2+ \|f^{+}\|^2\\
 \le& 2(\|A\|^2 \|f^{-}\|^2+\|B\|^2 \|f^{+}\|^2) +\|f^{+}\|^2\\
\le& \max\left( 2\|A\|^2, 2\|B\|^2+1\right) \cdot \left(\|f^{-}\|^2+\|f^+\|^2\right) = C \|f\|^2,
\end{aligned}
$$
where $C =\max\left( 2\|A\|^2, 2\|B\|^2+1\right)$. Thus,  $\Phi$ is bounded. 

\smallskip

\noindent (ii). Since $A$ is norm bounded below. Let $c_1$ be the constant such that
$$
\|Af\|\ge c_1\|f\|,  \ \forall \  f\in \ell^2({\mathbb N}^{-})
$$
Let $c_2 = \|-B\|$.  We now prove that $\Phi$ is norm bounded below.
$$
\begin{aligned}
\|\Phi f\|^2 =& \|A f^-+B f^+\|^2+\|f^{+}\|^2\\
=& \|A f^-+B f^+\|^2+\frac{1}{2} \|f^{+}\|^2+\frac12\|f^{+}\|^2 \\
\ge& \|A f^-+B f^+\|^2+\frac{1}{2c_2} \|-Bf^+\|^2+\frac12\|f^{+}\|^2  \ \\
\ge&  \min\left(1,\frac{1}{2c_2}\right)\cdot\left(\|A f^-+Bf^+\|^2+ \|-Bf^+\|^2\right)+\frac12\|f^{+}\|^2  \\
\ge&  \frac12\min\left(1,\frac{1}{2c_2}\right)\cdot\left(\| Af^-+B f^+\|+ \|-Bf^+\|\right)^2+\frac12\|f^{+}\|^2\\
\ge& \frac12\min\left(1,\frac{1}{2c_2}\right)\|Af^-\|^2+\frac12\|f^{+}\|^2\\
\ge& \min\left(\frac{c_1}{2},\frac{c_1}{4c_2},\frac12\right)\|f\|^2,
\end{aligned}
$$
where we have used the inequality $(a^2+b^2)\ge \frac{1}{2}(a+b)^2$ in the third last line and the triangle inequality in the second last line. This completes the proof.
\end{proof}
Recall that $\Phi_g$ admits a matrix representation of the form
$$\left[
  \begin{array}{ccc}
    T_{\widetilde{g}} & | & H_{\overline{g}} \\
    -- & | & -- \\
    O & |& I \\
  \end{array}
\right]
$$ as in \eqref{rep_Phi}. The above lemma can readily be used once we have the boundedness of the Toeplitz and Hankel operators.  We have thus the following proposition concerning the boundedness of the operators $T_g$.

\begin{Prop}
 ${\mathcal F}(g)$ forms a Bessel sequence for $L^2([-1/2,1/2])$ if and only if $g\in L^{\infty}[-1/2,1/2]$ and $T_g$ is a bounded operator on $\ell^2({\mathbb N}_0)$.
\end{Prop}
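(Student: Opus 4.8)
The plan is to derive everything from the block decomposition \eqref{rep_Phi} of the analysis operator $\Phi_g$, combined with the facts about Toeplitz and Hankel operators recalled above. Recall that, by the definition given earlier, $\mathcal{F}(g)$ is a Bessel sequence precisely when $\Phi_g$ is a bounded operator on $\ell^2(\mathbb{Z})$, so the task is to show that boundedness of $\Phi_g$ is equivalent to $g\in L^{\infty}[-1/2,1/2]$ together with boundedness of $T_g$.

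For the ``if'' direction I would assume $g\in L^{\infty}[-1/2,1/2]$; then $T_g$ is automatically bounded, since $\|T_g\|=\|g\|_{\infty}<\infty$ by the facts on Toeplitz symbols recalled above. The quickest way to produce the Bessel bound is a direct estimate: for every $f\in L^2[-1/2,1/2]$,
$$\sum_{n\ge 0}|\langle f,e_n\rangle|^2\le\sum_{n\in\mathbb{Z}}|\langle f,e_n\rangle|^2=\|f\|^2,\qquad \sum_{n<0}|\langle f,ge_n\rangle|^2=\sum_{n<0}|\langle\overline{g}f,e_n\rangle|^2\le\|\overline{g}f\|^2\le\|g\|_{\infty}^2\|f\|^2,$$
where the first inequality is Bessel's inequality for the orthonormal basis $\{e_n\}_{n\in\mathbb{Z}}$; summing gives the Bessel bound $B=1+\|g\|_{\infty}^2$. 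Alternatively, and more in keeping with the development here, since $g\in L^{\infty}$ forces $\widetilde{g}\in L^{\infty}$ and $\overline{g}\in L^{\infty}$, the Toeplitz operator $T_{\widetilde{g}}$ and the Hankel operator $H_{\overline{g}}$ are both bounded, so Lemma \ref{main_lemma}(i) applies to $\Phi_g$ (with $A=T_{\widetilde{g}}$, $B=H_{\overline{g}}$) and yields its boundedness.

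For the ``only if'' direction I would assume $\mathcal{F}(g)$ is Bessel, i.e.\ $\Phi_g$ is bounded. Feeding $\Phi_g$ the vectors lying in $\ell^2({\mathbb N}^{-})\oplus\{0\}$ and reading off \eqref{rep_Phi}, one gets $\|T_{\widetilde{g}}f^-\|\le\|\Phi_g\|\,\|f^-\|$ for all $f^-\in\ell^2({\mathbb N}^{-})$; that is, $T_{\widetilde{g}}$ is a bounded Toeplitz operator on $\ell^2({\mathbb N}_0)$ (after the isometric identification already made). By the converse half of the structure theorem for bounded Toeplitz operators recalled in the preliminaries (Hartman--Wintner/Brown--Halmos), its symbol must be essentially bounded, i.e.\ $\widetilde{g}\in L^{\infty}$ with $\|\widetilde{g}\|_{\infty}=\|T_{\widetilde{g}}\|$. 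Since $\widetilde{g}(x)=\overline{g(-x)}$ we have $\|g\|_{\infty}=\|\widetilde{g}\|_{\infty}<\infty$, hence $g\in L^{\infty}[-1/2,1/2]$ and $\|T_g\|=\|g\|_{\infty}<\infty$, which is exactly the asserted right-hand condition. The only non-elementary ingredient in the whole argument is this last appeal --- that a Toeplitz matrix bounded on $\ell^2({\mathbb N}_0)$ necessarily arises from an $L^{\infty}$ symbol of equal norm --- but it is already recorded in Section 2, so no real obstacle remains; should one wish to avoid citing it, one could instead recover $\|g\|_{\infty}$ from $\|T_{\widetilde{g}}\|$ by testing $T_{\widetilde{g}}$, which is the compression of multiplication by $\widetilde{g}$, against Fej\'er kernels.
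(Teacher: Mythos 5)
Your argument is correct and follows essentially the same route as the paper: the sufficiency via boundedness of $T_{\widetilde{g}}$ and $H_{\overline{g}}$ together with Lemma \ref{main_lemma} (your direct Bessel-inequality estimate is an equivalent shortcut), and the necessity by restricting $\Phi_g$ to $\ell^2({\mathbb N}^{-})\oplus\{0\}$ in the representation \eqref{rep_Phi} to get boundedness of $T_{\widetilde{g}}$ and then passing to $g\in L^{\infty}$, which is exactly the paper's second (Toeplitz-theoretic) proof, with the Hartman--Wintner/Brown--Halmos step that the paper leaves implicit stated explicitly.
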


\begin{proof}
If $g\in L^{\infty}[-1/2,1/2]$, then both $T_g$ and $H_g$ are bounded operators. Hence, by Lemma \ref{main_lemma}, $\Phi_g$ is a bounded linear operator which is equivalent to the fact that  ${\mathcal F}(g)$ forms a Bessel sequence for $L^2([-1/2,1/2])$  We now give the necessity part with two different proofs from two point of views. One is from the frame theory and the other one is from the Toeplitz operator theory.
\medskip

\noindent{\it (Beurling density proof).} From \cite[Theorem 3.2(i)]{gabardo2014frames}, it was proved that if ${\mathcal F}(g)$ forms a Bessel sequence and $g\not\in L^{\infty}$, then the upper Beurling density of ${\mathbb N}^{-}$ has to be zero. Recall that the upper Beurling density of $\Lambda$ is
$$
D^{+}(\Lambda) = \limsup_{R\rightarrow\infty}\sup_{x\in{\mathbb R}}\frac{\#(\Lambda\cap(x+[-R,R]))}{2R},
$$
From the definition, $D^{+}({\mathbb N}^{-}) = 1$. This is a contradiction. Hence,  $g \in L^{\infty}[-1/2,1/2]$. Thus, $T_g$ is bounded.

\medskip

\noindent{\it (From Toeplitz operator theory)} From the definition of the Bessel sequence, we know that $\|\Phi_gf\|^2\le B\|f\|^2$. Let $f = (f^-,f^+)\in \ell^2({\mathbb N}^-)\oplus\ell^2({\mathbb N}_0)$. Using the representation in (\ref{rep_Phi}),
$$
\|T_{\tilde g}f^-+H_{\overline{g}}f^+\|^2+ \|f^+\|^2\le B(\|f^-\|^2+\|f^+\|^2).
$$
Let $f^+ = 0$, we obtain $\|T_{\tilde g}f^-\|^2\le B\|f^-\|^2$. Hence, $T_{\tilde g}$ is a bounded operator and $g$ is bounded.
\end{proof}

\smallskip

We are now ready to prove Theorem \ref{thmain1}. From the facts about each spectrum, we can formulate Theorem \ref{thmain1} in the following theorem.

\begin{theorem}\label{th2}
\begin{enumerate}
\item  ${\mathcal F}(g)$ is complete if and only if $0\not\in\sigma_{p}(T_{{g}})$.
\item ${\mathcal F}(g)$  forms a frame if and only if $T_g$ is bounded and $0\not\in\sigma_{ap}(T_{{g}})$.
\item ${\mathcal F}(g)$ forms a Riesz basis if and only if $T_g$ is bounded and $0\not\in\sigma(T_{{g}})$.
\end{enumerate}

\end{theorem}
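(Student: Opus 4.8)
The plan is to deduce all three equivalences directly from Theorem~\ref{th1}, using the block–triangular matrix representation \eqref{rep_Phi} of the analysis operator $\Phi_g$, together with Lemma~\ref{main_lemma}, the Proposition characterizing the Bessel property of ${\mathcal F}(g)$, and Lemma~\ref{lem1+} to pass from $T_{\widetilde g}$ to $T_g$. Throughout I identify $L^2[-1/2,1/2]$ with $\ell^2({\mathbb Z}) = \ell^2({\mathbb N}^-)\oplus\ell^2({\mathbb N}_0)$ via Fourier coefficients, writing $f = (f^-,f^+)$, so that by \eqref{rep_Phi}
$$
\Phi_g(f^-,f^+) = \bigl(T_{\widetilde g}f^- + H_{\overline g}f^+,\ f^+\bigr).
$$
Since $\overline{0}=0$, Lemma~\ref{lem1+} says that $0$ lies in $\sigma_p$, $\sigma_{ap}$, or $\sigma=\sigma_{ap}\cup\sigma_c$ of $T_{\widetilde g}$ if and only if it lies in the corresponding set for $T_g$; hence it suffices to phrase everything in terms of $T_{\widetilde g}$.

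For (1): by Theorem~\ref{th1}(1), ${\mathcal F}(g)$ is complete iff $\Phi_g$ is injective; from the display, $\Phi_g(f^-,f^+)=0$ forces $f^+=0$ and then $T_{\widetilde g}f^-=0$, so $\Phi_g$ is injective iff $T_{\widetilde g}$ is, i.e.\ iff $0\notin\sigma_p(T_{\widetilde g})$, equivalently $0\notin\sigma_p(T_g)$. (When $g\notin L^\infty$ the same computation applies to the formal analysis operator and the densely defined $T_{\widetilde g}$; in that case $T_g$ is in any event unbounded, consistent with parts (2)--(3).) For (2): by Theorem~\ref{th1}(2), ${\mathcal F}(g)$ is a frame iff $\Phi_g$ is norm-bounded above and below. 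Norm-boundedness above is exactly the Bessel property, hence by the Proposition above is equivalent to $g\in L^\infty[-1/2,1/2]$, i.e.\ to $T_g$ — and then also $T_{\widetilde g}$ and $H_{\overline g}$ — being bounded. Granting this, $\Phi_g$ is norm-bounded below iff $T_{\widetilde g}$ is: the implication $(\Leftarrow)$ is Lemma~\ref{main_lemma}(ii) with $A=T_{\widetilde g}$ and $B=H_{\overline g}$, while $(\Rightarrow)$ follows by testing on vectors $(f^-,0)$, for which $\|\Phi_g(f^-,0)\| = \|T_{\widetilde g}f^-\|$. For the bounded operator $T_{\widetilde g}$, norm-boundedness below is precisely $0\notin\sigma_{ap}(T_{\widetilde g})$, equivalently $0\notin\sigma_{ap}(T_g)$; hence ${\mathcal F}(g)$ is a frame iff $T_g$ is bounded and $0\notin\sigma_{ap}(T_g)$.

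For (3): a Riesz basis is in particular a frame, so we may assume $g\in L^\infty$, hence $\Phi_g$ bounded; by Theorem~\ref{th1}(3) it remains to determine when $\Phi_g$ is in addition surjective. From the display, $\Phi_g(f^-,f^+)=(u^-,u^+)$ forces $f^+=u^+$ and $T_{\widetilde g}f^- = u^- - H_{\overline g}u^+$; since for each fixed $u^+$ the vector $u^- - H_{\overline g}u^+$ runs over all of $\ell^2({\mathbb N}^-)$ as $u^-$ does (using that $H_{\overline g}$ is bounded), $\Phi_g$ is surjective iff $T_{\widetilde g}$ is surjective. Combining with part (2), ${\mathcal F}(g)$ is a Riesz basis iff $T_{\widetilde g}$ is bounded, norm-bounded below, and surjective, i.e.\ iff $T_{\widetilde g}$ is invertible, i.e.\ iff $0\notin\sigma(T_{\widetilde g})$; by Lemma~\ref{lem1+} this is iff $0\notin\sigma(T_g)$, which moreover entails $T_g$ bounded and $0\notin\sigma_{ap}(T_g)$, so no condition is lost.

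The only point needing attention — more bookkeeping than genuine obstacle — is keeping track of the hypothesis $g\in L^\infty$: both Lemma~\ref{main_lemma}(ii) and the surjectivity reduction require $H_{\overline g}$ (equivalently $T_{\widetilde g}$) to be a bounded operator, which is exactly why one first isolates the Bessel/boundedness clause via the Proposition above. Everything else is unwinding the definitions of $\sigma_p$, $\sigma_{ap}$, $\sigma_c$ and transferring along Lemma~\ref{lem1+}; in particular there is no analytic input beyond Theorem~\ref{th1}, Lemma~\ref{main_lemma}, and Lemma~\ref{lem1+}.
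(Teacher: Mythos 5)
Your proposal is correct and follows essentially the same route as the paper: identify $L^2[-1/2,1/2]$ with $\ell^2({\mathbb N}^-)\oplus\ell^2({\mathbb N}_0)$, use the block representation \eqref{rep_Phi} of $\Phi_g$ together with Theorem \ref{th1}, transfer spectra from $T_{\widetilde g}$ to $T_g$ via Lemma \ref{lem1+}, invoke Lemma \ref{main_lemma}(ii) and the Bessel Proposition for the frame lower/upper bounds, and solve the triangular system (respectively test on vectors of the form $(f^-,0)$) for the surjectivity and converse directions. No substantive difference from the paper's argument.
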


\begin{proof} Because of Lemma \ref{lem1+}, it suffices to prove all the statements with $T_g$ replaced by $T_{\widetilde{g}}$.    Throughout the proof, we write $f = (f^-,f^+)\in \ell^2({\mathbb N}^-)\oplus \ell^2({\mathbb N}_0)$.

\medskip

\noindent(1) Suppose that $0\not\in\sigma_{p}(T_{\widetilde{g}})$. Then $T_{\widetilde{g}}$ is injective. We need to show that $\Phi_g$ is injective. To see this, let $\Phi_g f  =0$ and we have
$$
T_{\widetilde{g}} f^{-} + H_{\overline{g}} f^{+} = 0, \ \mbox{and} \ f^{+}=0.
$$
Hence, $T_{\widetilde{g}} f^{-}=0$. But $T_{\widetilde{g}}$ is injective, $f^{-}=0$. This shows $\Phi_g$ is injective and thus ${\mathcal F}(g)$ is complete. Conversely, suppose that $\Phi_g$ is injective and let $T_{\widetilde{g}}f = 0$. Consider $(f,0)$ and we have $\Phi_g \left(
                  \begin{array}{c}
                    f \\
                    0 \\
                  \end{array}
                \right)= \left(
                  \begin{array}{c}
                    T_{\widetilde{g}}f \\
                    0 \\
                  \end{array}
                \right) =0$. Hence, injectivity of $\Phi_g$ implies that $\left(
                  \begin{array}{c}
                    f \\
                    0 \\
                  \end{array}
                \right) = 0$.   Thus, $ f=0$.
\medskip

\noindent(2). Suppose that $T_g$ is bounded (and thus $g$ is bounded) and  $0\not\in\sigma_{ap}(T_{\widetilde{g}})$. Then the proof of upper bound is trivial by the fact that $g\in L^{\infty}$ since now $T_{\tilde g}$ and $H_{\overline g}$ are bounded. We now show that it satisfies the lower bound. Then we know, $T_{\widetilde{g}}$ is injective and has a closed range. Furthermore, it is norm bounded below. Hence, we can use Lemma \ref{main_lemma} (2) so that $\Phi_g$ is normed bounded below. Thus, Theorem \ref{th1}(2) shows that ${\mathcal F}(g)$ forms a frame. This completes the proof.
%
\medskip

Conversely, suppose that $0\in \sigma_{ap}(T_{\widetilde{g}})$. Then there exists $f_n$ with $\|f_n\|=1$ such that $\|T_{\widetilde{g}} f_n\|\rightarrow 0$ as $n\rightarrow\infty$. Taking the vector $(f_n, 0)$. Then
$$\Phi_g\left(
          \begin{array}{c}
            f_n \\
            0 \\
          \end{array}
        \right) = \left(
                    \begin{array}{c}
                      T_{\overline g} f_n \\
                      0 \\
                    \end{array}
                  \right)\rightarrow 0.
$$
Hence, $\Phi_g$ is not bounded below and hence ${\mathcal F}(g)$ cannot be a frame.

\medskip

\noindent(3).  Suppose that $0\not\in \sigma(T_{\widetilde{g}})$. Then $T_{\widetilde{g}}$ is injective and hence $\Phi_{g}$ is injective by (1). To see that $\Phi_g$ is surjective, we take $\left(
                    \begin{array}{c}
                      y^{-} \\
                      y^{+} \\
                    \end{array}
                  \right)$ and we try to solve
$$
\Phi_g \left(
                    \begin{array}{c}
                      f^{-} \\
                      f^{+} \\
                    \end{array}
                  \right) = \left(
                    \begin{array}{c}
                      y^{-} \\
                      y^{+} \\
                    \end{array}
                  \right).
                  $$
Thus, $f^{+} = y^{+}$ and $T_{\widetilde{g}}f^{-} = y^{-}-H_{\overline{g}}f^{+}$. As $T_{\widetilde{g}}$ is surjective, $T_{\widetilde{g}}^{-1}$ is bounded by inverse mapping theorem of linear operators. Thus, we can find $f^{-} = T_{\widetilde{g}}^{-1}(y^{-}-H_{\overline{g}}f^{+})$ in $\ell^2({\mathbb N})$. Hence, $\Phi_g$ is invertible and possesses a bounded inverse. $0\not\in\sigma(T_{\widetilde{g}})$ follows.

\medskip

Conversely, Suppose that $0\in\sigma(T_{\widetilde{g}})$. Then $0\in \sigma_{ap}(T_{\widetilde{g}})$ or $0\in \sigma_{c}(T_{\widetilde{g}})$. However, if $0\in \sigma_{ap}(T_{\widetilde{g}})$, then (2) shows that ${\mathcal F}(g)$ cannot be a frame and hence cannot be a Riesz bases. On the other hand, if $0\in \sigma_{c}(T_{\widetilde{g}})$, then $T_{\widetilde{g}}$ cannot be surjective. Hence, $\Phi_g$ cannot be surjective either since it was, then $\Phi_g f = (y,0)$ for any $y$ and it makes $T_{\widetilde{g}}$ is surjective.
\end{proof}

\medskip

We will give some geometric characterizations of certain window functions for the frame/basis property of  ${\mathcal F}(g)$. First of all, combining Theorem \ref{th2} and Widom-Devinatz theorem, we have the following theorem.

\begin{theorem}
${\mathcal F}(g)$ forms a Riesz basis for $L^2[-1/2,1/2]$ if and only if $g^{-1}\in L^{\infty}$ and $g$ satisfies the Helson-Szeg\"{o} condition, namely
 \begin{equation}\label{H-S}
 \frac{g}{|g|} = e^{i({u}^h+v+c)} \ \mbox{a.e. on} \ [-1/2,1/2]
 \end{equation}
 where $c\in{\mathbb R}$, $u,v$ are bounded real-valued functions, $\|v\|_{\infty}<\pi/2$ and ${u}^h$ is the harmonic conjugate of $u$, (i.e., $u^h$ satisfies the property that  $u+i{u}^h$ is an analytic function in the unit disk)
\end{theorem}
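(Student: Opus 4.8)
The plan is to read this theorem off as a direct consequence of Theorem~\ref{th2}(3) together with the Widom--Devinatz theorem (Theorem~\ref{WD}), so that essentially no new argument is required beyond correctly matching up hypotheses. By Theorem~\ref{th2}(3), ${\mathcal F}(g)$ forms a Riesz basis for $L^2[-1/2,1/2]$ if and only if $T_g$ is bounded and $0\notin\sigma(T_g)$; since $0\notin\sigma(T_g)$ is by definition the statement that $T_g$ is invertible, the whole question reduces to characterizing when $T_g$ is a bounded and invertible operator on $\ell^2({\mathbb N}_0)$.

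For the boundedness half I would invoke the fact recorded in the preliminaries that a Toeplitz operator $T_\phi$ is bounded precisely when $\phi\in L^\infty[-1/2,1/2]$, with $\|T_\phi\|=\|\phi\|_\infty$; equivalently one can quote the Proposition characterizing the Bessel property, noting that a Riesz basis is in particular a Bessel sequence, so that $g\in L^\infty$ is forced. Thus ``$T_g$ bounded'' is interchangeable with ``$g\in L^\infty[-1/2,1/2]$'', which is the tacit standing assumption under which Theorem~\ref{WD} is phrased. Granting $g\in L^\infty$, Theorem~\ref{WD} then says that $T_g$ is invertible if and only if $g^{-1}\in L^\infty$ and $g/|g|=e^{i(u^h+v+c)}$ a.e.\ with $c\in{\mathbb R}$, $u,v$ bounded real-valued, $\|v\|_\infty<\pi/2$, and $u^h$ the harmonic conjugate of $u$, i.e.\ the Helson--Szeg\"o condition \eqref{H-S}. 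Chaining the two equivalences yields the claim.

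One small bookkeeping point I would address explicitly: Theorem~\ref{th2} was proved by working with $T_{\widetilde g}$ rather than with $T_g$. Since $\sigma=\sigma_{ap}\cup\sigma_c$, Lemma~\ref{lem1+} gives $\sigma(T_{\widetilde g})=\overline{\sigma(T_g)}$, and as $\overline{0}=0$ we have $0\in\sigma(T_g)$ iff $0\in\sigma(T_{\widetilde g})$ (similarly $g\in L^\infty$ iff $\widetilde g\in L^\infty$). Because the statement of Theorem~\ref{th2} is already cleaned up in terms of $T_g$, this is automatic and needs at most a one-line remark.

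I do not anticipate a genuine obstacle: this theorem is a corollary, with the real content sitting in Theorem~\ref{th2} (the frame-theoretic reduction to Toeplitz invertibility) and in the classical Widom--Devinatz theorem, both of which are available to us. The only place where a little care is needed is not to forget the boundedness of $T_g$, since the Helson--Szeg\"o condition by itself constrains only the argument of $g$ and says nothing about $|g|$ being essentially bounded above; it is precisely the ``Riesz basis $\Rightarrow$ Bessel $\Rightarrow$ $g\in L^\infty$'' step (equivalently $\|T_g\|=\|g\|_\infty$) that supplies that missing piece.
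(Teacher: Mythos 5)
Your proposal is correct and matches the paper exactly: the paper offers no separate argument, stating the result as an immediate consequence of combining Theorem~\ref{th2}(3) with the Widom--Devinatz theorem, which is precisely your chain of equivalences. Your added remarks on $T_{\widetilde g}$ versus $T_g$ via Lemma~\ref{lem1+} and on the Bessel/boundedness point ($g\in L^\infty$) are sensible bookkeeping the paper leaves implicit.
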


\medskip

\subsection{Classification of admissible real window functions}
As we have discussed in the introduction, Helson-Szeg\"{o} condition does not give us a directly checkable criterion even for simple classes of functions. Furthermore, there is not even an analogous theorem for $0$ to be outside the point/approximate point spectrum.  We will turn our attention to certain classes of functions.  We first give a complete characterization of the real-valued window functions.

\begin{theorem}\label{threalg}
 Let $g$ be real-valued. Then
 \begin{enumerate}
\item  ${\mathcal F}(g)$ is complete.
\item  The following are equivalent.
\begin{enumerate}
\item  ${\mathcal F}(g)$ forms a Riesz basis.
 \item ${\mathcal F}(g)$ forms a frame.
\item $0\not\in[\mbox{essinf}(g),\mbox{esssup}(g)]$.
\end{enumerate}
\end{enumerate}
\end{theorem}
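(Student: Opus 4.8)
\emph{Part (1).} The plan is to reduce everything, via Theorems~\ref{th2} and~\ref{thMR}, to the spectrum of $T_g$. Part~(1) is then immediate: since $g$ is real-valued, the Coburn alternative (Theorem~\ref{thMR}(2)) says $T_g$ is injective, i.e. $0\notin\sigma_p(T_g)$, and Theorem~\ref{th2}(1) translates this into completeness of ${\mathcal F}(g)$. All the real content is in Part~(2).

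\emph{Part (2), the bookkeeping.} I would first dispose of (a)$\Leftrightarrow$(c). By Theorem~\ref{th2}(3), ${\mathcal F}(g)$ is a Riesz basis iff $T_g$ is bounded and $0\notin\sigma(T_g)$. Boundedness of $T_g$ is the same as $g\in L^\infty$ (since $\|T_g\|=\|g\|_\infty$), and for real $g\in L^\infty$ Theorem~\ref{thMR}(3) gives $\sigma(T_g)=[\mbox{essinf}(g),\mbox{esssup}(g)]$. Hence (a) holds iff $g\in L^\infty$ and $0\notin[\mbox{essinf}(g),\mbox{esssup}(g)]$, i.e. iff (c). (Here I take $g\in L^\infty$ as a standing assumption in Part~(2): each of (a), (b) makes ${\mathcal F}(g)$ a Bessel sequence and hence forces $g\in L^\infty$ by the Proposition above, so nothing is lost.) The implication (a)$\Rightarrow$(b) is trivial, a Riesz basis being a frame.

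\emph{Part (2), the one real point.} What remains is (b)$\Rightarrow$(a). By Theorem~\ref{th2}(2) the frame hypothesis gives $0\notin\sigma_{ap}(T_g)$, and I need to upgrade this to $0\notin\sigma(T_g)$ before invoking Theorem~\ref{th2}(3). The key observation I would use is that for real-valued $g$ the Toeplitz operator $T_g$ is self-adjoint: $T_g^{\,*}=T_{\bar g}=T_g$, checked either directly from the Toeplitz matrix (using $\overline{\widehat g(n)}=\widehat g(-n)$) or from $\langle P_+(gf),h\rangle=\langle f,P_+(gh)\rangle$ for $f,h\in\tilde{H}^2$. Hence $\sigma(T_g)\subset{\mathbb R}$, and a self-adjoint operator has no residual spectrum, so $\sigma_{ap}(T_g)=\sigma(T_g)$. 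Thus $0\notin\sigma_{ap}(T_g)$ already yields $0\notin\sigma(T_g)$, so ${\mathcal F}(g)$ is a Riesz basis. Combining with the previous paragraph, (a)$\Leftrightarrow$(b)$\Leftrightarrow$(c).

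\emph{Where the difficulty sits.} Everything apart from the identity $\sigma_{ap}(T_g)=\sigma(T_g)$ is just substitution into the stated theorems. That identity is precisely what collapses ``frame'' and ``Riesz basis'' to the same condition for real windows: without it, the frame hypothesis would only remove $0$ from $\sigma_{ap}(T_g)$, which a priori could be strictly smaller than $\sigma(T_g)=[\mbox{essinf}(g),\mbox{esssup}(g)]$, and (b)$\Rightarrow$(a) would fail. So the single idea beyond bookkeeping is self-adjointness of $T_g$ for real symbols (or, staying closest to the quoted facts: $\sigma(T_g)=[\mbox{essinf}(g),\mbox{esssup}(g)]$ is a compact subset of the line, hence equals its own boundary, and $\partial\sigma(T_g)\subseteq\sigma_{ap}(T_g)$).
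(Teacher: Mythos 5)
Your argument is correct and follows essentially the paper's own route: part (1) via the Coburn alternative together with Theorem \ref{th2}(1), and part (2) by translating (a), (b), (c) into statements about $\sigma(T_g)$ and $\sigma_{ap}(T_g)$ through Theorem \ref{th2} and Theorem \ref{thMR}(3). The one place where you go beyond the paper is exactly the step you single out: for (b)$\Rightarrow$(a) you justify $\sigma_{ap}(T_g)=\sigma(T_g)$ by the self-adjointness of $T_g$ for real symbols (equivalently, by observing that $[\mbox{essinf}(g),\mbox{esssup}(g)]$ has empty interior in ${\mathbb C}$, hence equals $\partial\sigma(T_g)$, which lies in $\sigma_{ap}(T_g)$). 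The paper's written proof of the corresponding direction (its (b)$\Rightarrow$(c)) only records the chain $0\notin\sigma_{ap}(T_g)\subset\sigma(T_g)=[\mbox{essinf}(g),\mbox{esssup}(g)]$, which as stated points the wrong way; one needs the reverse inclusion, i.e.\ precisely the identity you supply, to conclude that $0$ avoids the whole interval (this matters, e.g., for $g=-\chi_{[-1/2,0)}+\chi_{[0,1/2]}$, where $0$ lies in the interval but not in the essential range, so Theorem \ref{thMR}(1) alone does not place $0$ in $\sigma_{ap}(T_g)$). So your write-up is, if anything, more complete than the paper's at the only nontrivial point; both of your suggested justifications (self-adjointness kills the residual spectrum, or the boundary-of-spectrum fact) are valid and buy the same collapse of ``frame'' and ``Riesz basis'' for real windows.
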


\begin{proof}
(1). By Theorem \ref{thMR}(2), we know that if $g$ is real-valued, then $T_g$ is injective. Hence, $0\not\in \sigma_p(T_g)$. Hence, by Theorem \ref{th2}, ${\mathcal F}(g)$ is complete.

\medskip

(2) (a) implies (b) is from definition.  (c) implies (a) follows from Theorem \ref{thMR}(iii) which states that  $\sigma(T_{{g}}) = [\mbox{essinf}(g),\mbox{esssup}(g)]$. As $0\not\in\sigma(T_{{g}})$ by (c), Theorem \ref{th2}(3) shows that ${\mathcal F}(g)$ is a Riesz basis.  

\medskip

 We now show (b) implies (c). By Theorem \ref{th2} (2) and Theorem \ref{thMR},  (b) implies that 0 $\not \in \sigma_{ap}(T_{\widetilde{g}})= \sigma_{ap}(T_{g}) \subset \sigma(T_{g})=[\mbox{essinf}(g),\mbox{esssup}(g)]$. This completes the proof.
\end{proof}

\medskip

\begin{Example}\label{example}
{\rm (1). Let $g(x) = x\chi_{[-1/2,1/2]}$. Then ${\mathcal F}(g)$ is complete as $g$ is real-valued. However, it does not form a frame because $0$ is in $[\mbox{essinf}(g),\mbox{esssup}(g)]= [-1,1]$. This example will be used in derivative samplings.}

\medskip

{\rm (2). Let $g(x) = -\chi_{[-1/2,0)}+\chi_{[0,1/2]}$. Then ${\mathcal F}(g)$ is complete as $g$ is real-valued. However, it does not form a frame because $0$ is in the interval $[\mbox{essinf}(g),\mbox{esssup}(g)]= [-1,1]$ (note that essran$(g) = \{-1,1\}$).  On the other hand,   if we consider the windowed exponentials $\{g(x)e^{2\pi i n x}: n\in{\mathbb Z}\}$, then the system forms a frame for $L^2[-1/2,1/2]$  since $|g|=1$.}
\end{Example}

\medskip

\subsection{Classification of admissible complex  exponential functions}
We now turn to study complex windows. As we mentioned before,  it is very difficult to give a  characterization for all complex window functions.   We start with the special case when $g_{\xi}(x) = e^{2\pi i \xi x}$ for some $\xi \in \mathbb{R}$. Then
 $$
 {\mathcal F} (g_{\xi})= \{e^{2\pi i nx}: n=0,1,2,...\}\cup \{e^{2\pi i (\xi+n)x}: n=-1,-2,...\}.
 $$
 
Notice that the set  ${\mathcal F} (g_{\xi})$ consists of complex exponential functions and the problem of determining whether $ {\mathcal F} (g_{\xi})$ is a frame of $L^2[-1/2,1/2]$ is, in fact, a density problem of Fourier frames. Notice that in this case. the lower and upper Beurling density of $\Lambda=\{\mathbb{N}_0, \mathbb{N}^{-}+\xi\}$ are both 1 and hence this problem falls into the interesting gap zone of existing density results (see \cite{casazza2006density}).  We can also see this problem from the point of view of perturbation by noticing that $ {\mathcal F} (g_{\xi})$ is a perturbation version of the standard Fourier basis. To the best of our knowledge, there are no results about the arbitrary shifting of the negative frequency of the standard Fourier basis, and we refer to the reader \cite{acosta2009stability, aldroubi2001nonuniform, balan1997stability} for advances on this direction.

\medskip

Below, we can give a complete characterization for complex exponential functions.

 \begin{theorem}\label{th4}
  \begin{enumerate}
 \item If $\xi<-1/2$, then ${\mathcal F}(g_{\xi})$ is incomplete.
 \item If $-1/2 <\xi <1/2$, then ${\mathcal F}(g_{\xi})$  is a Riesz basis.
 \item If $-1/2+n<\xi<1/2+n, n\in \N^+$,  then ${\mathcal F}(g_{\xi})$ is a frame but not a Riesz basis.
 \item If $\xi=-1/2+n, n\in \N_0$, then ${\mathcal F}(g_{\xi})$ is complete but not a Riesz basis nor a frame.
 \end{enumerate}
\end{theorem}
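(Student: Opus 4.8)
\emph{Setup.} The plan is to combine four ingredients: an elementary reduction that peels off integer shifts, Kadec's $1/4$-theorem, an explicit Paley--Wiener (entire function) construction, and the analysis-operator bookkeeping of Theorem \ref{th1}. Throughout I identify $L^2[-1/2,1/2]$ with the Paley--Wiener space $PW$ of entire functions of exponential type $\le\pi$ that are square-integrable on $\mathbb{R}$, via $f\mapsto F$ with $F(z)=\int_{-1/2}^{1/2}f(t)e^{-2\pi izt}\,dt$; then $\langle f,e^{2\pi i\lambda x}\rangle=F(\lambda)$ for real $\lambda$, so ${\mathcal F}(g_\xi)$ is complete if and only if the only $F\in PW$ vanishing on $\Lambda_\xi:=\N_0\cup(\xi+\N^-)$ is $F\equiv 0$.

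\emph{Structural reduction.} For $n\in\N^+$ one has, as indexed families, ${\mathcal F}(g_\xi)={\mathcal F}(g_{\xi-n})\cup\{h_1,\dots,h_n\}$ with $h_j=e^{2\pi i(\xi-j)x}$, $1\le j\le n$ (just the identity $\xi+m=(\xi-n)+(m+n)$). Since $g_\xi\in L^\infty$, ${\mathcal F}(g_\xi)$ is a Bessel sequence. Hence if ${\mathcal F}(g_{\xi-n})$ is a Riesz basis, then ${\mathcal F}(g_\xi)$ is a Bessel sequence containing a frame, so a frame; but it is not minimal, since each $h_j$ lies in $\overline{\operatorname{span}}\,{\mathcal F}(g_{\xi-n})=L^2[-1/2,1/2]$, so it is not a Riesz basis. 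And if ${\mathcal F}(g_{\xi-n})$ is merely complete, the larger family ${\mathcal F}(g_\xi)$ is complete. Choosing $n\ge1$ with $\xi-n\in(-1/2,1/2)$, this reduces (3) to (2), and reduces the completeness and non-Riesz-basis assertions of (4) to the single case $\xi=-1/2$.

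\emph{Cases (1) and (2).} Multiplication by $e^{2\pi itx}$ is a unitary of $L^2[-1/2,1/2]$ carrying $\{e^{2\pi i\lambda x}:\lambda\in\Lambda\}$ to $\{e^{2\pi i\lambda x}:\lambda\in\Lambda+t\}$, so ${\mathcal F}(g_\xi)$ is a Riesz basis iff $\{e^{2\pi i\mu x}:\mu\in\Lambda_\xi-\tfrac{\xi}{2}\}$ is. In the natural $\mathbb{Z}$-enumeration of $\Lambda_\xi-\tfrac{\xi}{2}$ each point is within $|\xi|/2$ of the corresponding integer, and the points are distinct (as $|\xi|<1/2<1$); when $|\xi|<1/2$ this displacement is $<1/4$, so Kadec's $1/4$-theorem yields a Riesz basis, giving (2). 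For (1) I would take $F(z)=\sin(\pi z)\,\Gamma(z+1)/\Gamma(z+1-\xi)$: the poles of $\Gamma(z+1)$ at the negative integers are cancelled by zeros of $\sin\pi z$, so $F$ is entire, and its zeros include $\{0,1,2,\dots\}$ (remaining zeros of $\sin\pi z$) and $\{\xi-1,\xi-2,\dots\}$ (zeros of $1/\Gamma(z+1-\xi)$), hence all of $\Lambda_\xi$. Stirling's and the reflection formula give $|F(iy)|\asymp|y|^{\xi}e^{\pi|y|}$ as $|y|\to\infty$, so $F$ has exponential type $\pi$, while $|F(x)|\asymp|x|^{\xi}$ as $|x|\to\infty$ on $\mathbb{R}$ (up to a bounded oscillating factor), so $F|_{\mathbb{R}}\in L^2(\mathbb{R})$ exactly when $\xi<-1/2$. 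For such $\xi$, $F\in PW\setminus\{0\}$ vanishes on $\Lambda_\xi$, so ${\mathcal F}(g_\xi)$ is incomplete.

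\emph{The boundary case $\xi=-1/2$ and its propagation.} If ${\mathcal F}(g_{-1/2})$ were a frame with bounds $A\le B$, then a stability theorem for Fourier frames under small frequency perturbations (cf. \cite{balan1997stability, acosta2009stability}) would make ${\mathcal F}(g_{-1/2-\varepsilon})$ a frame for all small $\varepsilon>0$ (only the frequencies $-3/2,-5/2,\dots$ move, each by $\varepsilon$), contradicting (1); so ${\mathcal F}(g_{-1/2})$ is not a frame. For $\xi=n-1/2$ with $n\ge1$: write ${\mathcal F}(g_{n-1/2})={\mathcal F}(g_{-1/2})\cup\{h_1,\dots,h_n\}$, so its analysis operator is $\Phi_{g_{-1/2}}$ together with the coordinates $f\mapsto\langle f,h_j\rangle$; by Theorem \ref{th1}, $\Phi_{g_{-1/2}}$ is injective (completeness) but not bounded below (not a frame), hence has non-closed range, so there exist unit vectors $f_k$ with $\Phi_{g_{-1/2}}f_k\to0$, and injectivity forces $f_k\rightharpoonup0$ weakly, whence $\langle f_k,h_j\rangle\to0$ and $\Phi_{g_{n-1/2}}f_k\to0$; thus ${\mathcal F}(g_{n-1/2})$ is not a frame. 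The one remaining point, and the delicate heart of the proof, is the completeness of ${\mathcal F}(g_{-1/2})$: here I would divide a hypothetical nonzero $F\in PW$ vanishing on $\Lambda_{-1/2}$ by the canonical product $G(z)=\sin(\pi z)\,\Gamma(z+1)/\Gamma(z+3/2)$, whose zero set is exactly $\Lambda_{-1/2}$ with simple zeros; then $F/G$ is entire of exponential type $0$, and the bounds $|F(x)|,|F'(x)|=O(1)$ on $\mathbb{R}$ (since $f(t)$ and $tf(t)$ lie in $L^1[-1/2,1/2]$) together with $|G(x)|\asymp|x|^{-1/2}$ away from the zeros of $G$ force $F/G=O(|x|^{1/2})$ on $\mathbb{R}$, so $F/G$ is a polynomial; since $G\notin L^2(\mathbb{R})$, that polynomial is $0$, i.e. $F\equiv0$. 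I expect this growth comparison to be the main obstacle: it needs care near the zeros of $G$, and it is exactly the order $-1/2$ of the gamma ratio — the half-integrality forced by $\xi=-1/2$ — that makes $|G(x)|\asymp|x|^{-1/2}$ just barely fail square-integrability.
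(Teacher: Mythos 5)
Your proposal is correct in substance and, for parts (2) and (3), essentially coincides with the paper: both reduce (3) to (2) by peeling off the finitely many extra exponentials (a Bessel system containing a frame is a frame; non-minimality rules out a Riesz basis), and both obtain (2) from Kadec's $1/4$-theorem after a modulation. For (1) your argument is the Fourier-dual of the paper's: the paper, following Levinson, symmetrizes by the modulation $e^{-\pi i \xi x/2}$ and exhibits $\sin(\pi x)\cos^{2t-1}(\pi x)\in L^2[-1/2,1/2]$ (with $t=-\xi/2>1/4$) orthogonal to every element, whereas you exhibit the corresponding Paley--Wiener function $\sin(\pi z)\Gamma(z+1)/\Gamma(z+1-\xi)$ vanishing on $\N_0\cup(\xi+\N^-)$; the content is equivalent. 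The genuine divergence is in (4). The paper cites Redheffer--Young to get that the boundary system ($\xi=-1/2$, i.e.\ the symmetric quarter-shift) is exact but not a Riesz basis, deduces ``not a frame'' from ``an exact frame is a Riesz basis,'' and propagates along the chain using the fact that deleting finitely many elements from a frame leaves a frame or an incomplete set. You instead (i) get ``not a frame'' at $\xi=-1/2$ from a Balan-type stability theorem for Fourier frames under small frequency perturbations combined with your part (1), (ii) prove completeness at $\xi=-1/2$ by dividing a hypothetical $F\in PW$ by the canonical product $G(z)=\sin(\pi z)\Gamma(z+1)/\Gamma(z+3/2)$, and (iii) propagate ``not a frame'' by a weak-convergence argument on the analysis operator (which is correct: $\|f_k\|=1$, $\Phi f_k\to 0$ and injectivity force $f_k\rightharpoonup 0$, so the finitely many added coordinates also tend to $0$). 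This buys self-containedness---you re-derive the Redheffer--Young exactness rather than quote it---at the price that step (ii) is only sketched: you still need to justify that $F/G$ has exponential type $0$, to control $F/G$ near the simple, separated zeros of $G$ (using a lower bound of order $|\lambda|^{-1/2}$ on $|G'(\lambda)|$), and to supply the Phragm\'en--Lindel\"of argument that type $0$ together with $O(|x|^{1/2})$ growth on $\R$ forces a polynomial, hence (as $G\notin L^2(\R)$) the zero function. These are exactly the delicate points the paper's citation absorbs; your plan for them is the classical one and should go through, but as written they are the only incomplete places. Likewise, (i) rests on the perturbation theorem of \cite{balan1997stability}, which the paper cites only as background, so be sure to invoke it in a form valid for frames (not merely Riesz bases) of exponentials.
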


\begin{proof}
We define a map $T: L^2[-\frac{1}{2}, \frac{1}{2}] \rightarrow L^2 [-\frac{1}{2}, \frac{1}{2}] $ by
 $$
 Tf(x)=e^{-\pi i \xi x/2} f(x).
 $$
  It is easy to see that $T$ is an invertible bounded linear operator. Hence
$\mathcal{F}(g_{\xi})$ is complete/ a Riesz basis/ a frame if and only if $T(\mathcal{F}(g_{\xi}))$ is complete/ a Riesz basis/ a frame. Note that
$$
T(\mathcal{F}(g_{\xi}))=\{ e^{2\pi i (n-\frac{\xi}{2})x}: n\in \N_0\}\cup \{e^{2\pi i (\frac{\xi}{2}+n)x}: n\in \N^-\}.
$$
Now define the real sequence
$$
\Gamma_{\xi}=\left\{ n-\frac{\xi}{2}, n\in \mathbb{N}_0\right\} \cup \left\{ n+\frac{\xi}{2}: n \in \mathbb{N}^-\right\}
$$
 and denote by $E(\Gamma_{\xi})=\{e^{2\pi i \lambda_n \xi}: \lambda_n \in \Gamma_{\xi}\}$ the  complex exponential system associated with $\Gamma_{\xi}$. Then we have $T(\mathcal{F}(g_{\xi})) = E(\Gamma_{\xi})$ and it is sufficient to determine whether $E(\Gamma_{\xi})$ is complete/ a Riesz basis/ a frame. 
 
 \begin{enumerate}
 \item Following the idea of proof of Theorem V in \cite{levinson1940gap}, we let $t = -\xi/2$ and let $F_t(x) = \sin (\pi x)\cos^{2t-1}(\pi x)$. Then by calculation, $F_t\in L^2[-1/2,1/2]$ if and only if $t>1/4$.  Hence if $\xi<-1/2$, then $F_t\in L^2[-1/2,1/2]$.   For $n\ge 0$,  
$$\begin{aligned}
& \int_{-1/2}^{1/2} F_t(x) e^{-2\pi i (n+t)x}dx \\=& \frac{1}{2^{2t}}\int_{-1/2}^{1/2} (e^{i\pi x}-e^{- i\pi x}) (e^{\pi x}+e^{- i\pi x})^{2t-1}  e^{-2\pi i (n+t)x}\\
=&\int_{-1/2}^{1/2} (e^{2\pi i x}-1)  (1+e^{-2\pi i x})^{2t-1}  e^{-2\pi i nx}dx\\
=&\lim_{r\rightarrow 1^{+}} \int_{-1/2}^{1/2} (e^{2\pi i x}-1)  (1+ re^{-2\pi i x})^{2t-1}  e^{-2\pi i nx}dx\\
=&\lim_{r\rightarrow 1^{+}} \int_{-1/2}^{1/2} (e^{2\pi i x}-1) \sum_{k=0}^{\infty} \binom{2t-1}{k}r^k e^{-2\pi ikx}  e^{-2\pi i nx}dx\\
=&\lim_{r\rightarrow 1^{+}} \sum_{k=0}^{\infty} \binom{2t-1}{k}r^k\int_{-1/2}^{1/2} (e^{2\pi i x}-1)  e^{-2\pi i(k+n)x}  dx =0.\\
\end{aligned}
$$
Similarly,  we can also show that $\int_{-1/2}^{1/2} F_t(x) e^{-2\pi i (n-t)x}dx =0$.  It follows that  $E(\Gamma_{\xi})$ is not complete for $\xi<-1/2$.
 \item If $-1/2<\xi<1/2$, then $ E(\Gamma_{\xi})$ is a Riesz basis followed by  the classicial Kadec-1/4 theorem.   
 
 \item Suppose that  $\xi =\xi_0+n$ for $\xi_0 \in (-1/2,1/2)$ and $n$ is a positive integer. Then the original system $\mathcal{F}(g_{\xi_0}) \subsetneq \mathcal{F}(g_{\xi}) $. Note that we have proved that $\mathcal{F}(g_{\xi_0})$ is a Riesz basis by (2), hence $\mathcal{F}(g_{\xi})$ is a frame, but not a Riesz basis since there are excessive elements.
 
 \item First, we can use \cite[Theorem 4 and Section 8]{redheffer1983completeness}) to prove that  $E(\Gamma_{-1/2})$ is exact (i.e., minimal and complete) but not a Riesz basis.  Then we know $E(\Gamma_{-1/2})$ is not a frame either. Otherwise, $E(\Gamma_{-1/2})$ will be an exact frame, which means it is a Riesz basis and results in a contradiction. Hence, we conclude that $\mathcal{F}(g_{-1/2})$ is complete but not a Riesz basis nor a frame. In general, note that we have
 $$
 \mathcal{F}(g_{-1/2}) \subset \mathcal{F}(g_{1/2}) \subset  \mathcal{F}(g_{3/2}) \subset \cdots.
 $$
 Suppose that there exists a set in this chain forms a frame.  Then we can remove finitely many elements from this set to obtain $\mathcal{F}(g_{-1/2})$. Note that the removal of finitely many elements from a frame leaves either a frame or an incomplete set. Now, $ \mathcal{F}(g_{-1/2})$ is complete, so if one of the sets forms a frame, then $ \mathcal{F}(g_{-1/2})$ is a frame also. This is a contradiction to what we just proved. Hence,
 every set in the above chain is complete and is not a Riesz basis nor frame.
 \end{enumerate}

\end{proof}

\section{Applications}

\noindent{\bf Dynamical Sampling. } Problems in determining the frame property of ${\mathcal F}(g)$  can be applied to dynamical sampling problems. Formulated mathematically,  Aldroubi and his collaborators \cite{aceska2016scalability, aceska2014dynamical, aldroubi2017iterative, aldroubi2015dynamical, aldroubi2013dynamical,  aldroubi2015exact, philipp2017bessel} considered $F\in \ell^2({\mathbb Z})$ and the spatial sampling sets  $\{\Omega_i\}$, $i=1,...,L$, are collections of proper subsets in ${\mathbb Z}$. They are usually assumed to be nested, i.e., $ \Omega_1 \subset \cdots \subset \Omega_L$, and are well-spread. For example, $\Omega_1=\cdots \Omega_L=m\mathbb{Z}  $ $(m>1)$ in \cite{ aldroubi2013dynamical}. Let also $G_1,...,G_L\in \ell^2({\mathbb Z})$\footnote{To avoid confusion, we will use upper case letter to denote functions in $\ell^2({\mathbb Z})$ and Paley-Wiener space $PW=\{f\in L^2(\mathbb{R}): supp(\hat f) \subset [-1/2, -1/2] \}$ and lower case letter to denote functions in $L^2[-1/2,1/2]$ }. We want to reconstruct $F$ from the samples
$$
(G_1\ast F(\Omega_1),....,G_L\ast F (\Omega_L))
$$
(Here $G(\Omega_i)=\{G(n): n\in \Omega_i\}$). Notice that this model is equivalent to sampling through the Paley-Wiener space of bandlimited functions supported on a bandwidth of length one as well as certain shift-invariant spaces \cite{aldroubi2013dynamical}. As $\Omega_i$ are proper subsets of ${\mathbb Z}$, we may think they are below the Nyquist rate and hence samples on an individual $\Omega_i$ are not able to recover $F$.

\medskip

We say that $(G_i,\Omega_i)_{i=1,...,L}$ allows a {\it stable sampling} if the linear map
$$
{\mathbf A}: \ell^2({\mathbb Z})\rightarrow \bigoplus_{i=1}^{L}\ell^2(\Omega_i)
$$ defined by
\begin{equation}\label{A_map}
{\mathbf A}F = (G_1\ast F(\Omega_1),....,G_L\ast F (\Omega_L))
\end{equation}
admits a bounded left-inverse ${\mathbf B}$ such that ${\mathbf B}{\mathbf A}F = F$ for all $F\in \ell^2({\mathbb Z})$. We define the Fourier transform of $G\in \ell^2({\mathbb Z})$ by
$$
g(x): = \widehat{G}(x) = \sum_{n=-\infty}^{\infty} G(n)e^{-2\pi i nx}, x\in{\mathbb T},
$$
where ${\mathbb T} $ is the circle group identified as $ [-1/2,1/2]$. We first notice that for  ${\mathbf A}$ to be well-defined, one  should naturally require $G_i\ast F\in \ell^2({\mathbb Z})$. In fact, this is equivalent to $\widehat{G_i}\in L^{\infty}[-1/2,1/2]$. The following standard theorem connects dynamical sampling problems with windowed exponentials we have been considering. This can be seen directly by taking Fourier transform, so we will omit its proof.

\begin{theorem}\label{th0.3}
Let $G_1,...,G_L\in\ell^2({\mathbb Z})$ be sequences such that $\widehat{G_i}\in L^{\infty}$ and let $g_i(\xi) = \widehat{G_i}(\xi)$. Then $(G_i,\Omega_i)_{i=1,...,L}$ allows a stable sampling if and only if $\bigcup_{i=1}^{L}{\mathcal E}(\overline{g_i},\Omega_i)$ forms a frame of windowed exponentials on $L^2[-1/2,1/2]$.
\end{theorem}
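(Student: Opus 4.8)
The plan is to use the Fourier transform to move the whole problem into $L^2[-1/2,1/2]$ and there identify the sampling map $\mathbf A$ as the analysis operator of a windowed exponential system. I would let $\mathcal U:\ell^2(\mathbb Z)\to L^2[-1/2,1/2]$ be the unitary $H\mapsto \widehat H$, and for $F\in\ell^2(\mathbb Z)$ set $f=\mathcal U F$. Since $\widehat{G_i\ast F}=\widehat{G_i}\,\widehat F=g_i f$, the inversion formula gives, for each $n\in\Omega_i$,
$$
(G_i\ast F)(n)=\int_{-1/2}^{1/2} g_i(x)f(x)\,e^{2\pi i nx}\,dx=\langle f,\ \overline{g_i}\,e_{-n}\rangle .
$$
Hence $\mathbf A\mathcal U^{-1}$ is, up to the relabelling $n\mapsto -n$ on each $\Omega_i$ (a unitary identification $\ell^2(\Omega_i)\cong\ell^2(-\Omega_i)$, which disappears entirely if one instead normalizes $\widehat H(x)=\sum_n H(n)e^{2\pi i nx}$), precisely the analysis operator of $\bigcup_{i=1}^L\mathcal E(\overline{g_i},\Omega_i)$. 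The hypothesis $g_i\in L^\infty$ makes multiplication by $g_i$ a bounded operator on $L^2[-1/2,1/2]$, so $\mathbf A$ is bounded and, equivalently, $\bigcup_{i=1}^L\mathcal E(\overline{g_i},\Omega_i)$ is a Bessel system.

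Next I would reconcile the two notions of stability. For a bounded linear operator between Hilbert spaces, possessing a bounded left inverse $\mathbf B$ with $\mathbf B\mathbf A=I$ is equivalent to being bounded below: if $\|\mathbf A F\|\ge c\|F\|$ then $\mathbf A$ is injective with closed range, and $\mathbf B$ may be taken to be the inverse of $\mathbf A$ on that range composed with the orthogonal projection onto it; conversely $\|F\|=\|\mathbf B\mathbf A F\|\le\|\mathbf B\|\,\|\mathbf A F\|$ forces the lower bound. Consequently $(G_i,\Omega_i)_{i=1,\dots,L}$ allows a stable sampling if and only if $\mathbf A$ is bounded above and below, which by the previous paragraph holds if and only if the analysis operator of $\bigcup_{i=1}^L\mathcal E(\overline{g_i},\Omega_i)$ is bounded above and below. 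By the very definition of a frame, i.e. the two-sided inequality $A\|f\|^2\le\sum_{i=1}^L\sum_{n\in\Omega_i}|\langle f,\overline{g_i}e_n\rangle|^2\le B\|f\|^2$, this is exactly the assertion that $\bigcup_{i=1}^L\mathcal E(\overline{g_i},\Omega_i)$ is a frame of windowed exponentials for $L^2[-1/2,1/2]$.

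I do not expect a substantial obstacle: the statement is essentially a change of variables. The one place that will require care is the translation dictionary in the first step, namely checking that convolution on $\mathbb Z$ corresponds to multiplication by $g_i$ (and not by $\overline{g_i}$ or $\widetilde{g_i}$) and that restriction to $\Omega_i$ produces the pairings against the conjugated window $\overline{g_i}$ over the correct index set, up to the harmless reflection noted above. Once that is pinned down, the chain ``stable sampling $\Leftrightarrow$ bounded and bounded-below analysis operator $\Leftrightarrow$ frame'' is immediate, and the argument is insensitive to the particular choice of the subsets $\Omega_i$.
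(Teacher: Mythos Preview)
Your proposal is correct and follows exactly the route the paper indicates: the paper does not actually write out a proof but simply remarks that the result ``can be seen directly by taking Fourier transform,'' which is precisely what you do. Your write-up is in fact more careful than the paper's, since you explicitly track the sign convention and note the harmless reflection $n\mapsto -n$ (or equivalently the choice of sign in $\widehat H$) that the paper glosses over.
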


Because of this theorem, the frame property of ${\mathcal F}(g)$  allows us to determine the set of all $G$ such that
$$
\{F(\N^{-}), (G\ast F)(\N_{0})\} 
$$
is stably recoverable (note that $F(\N_{-}) = (\delta_0\ast F)(\N^{-})$ with $\delta_0$ is the Dirac function on $\ell^2({\mathbb Z})$). In contrast with $\Omega_i = m{\mathbb Z}$ in \cite{aldroubi2013dynamical}. Notice that  ${\mathbb N}^{-}$ and ${\mathbb N}_0$ has zero lower Beurling densities. This means that each individual samples $F(\N^{-})$, $ (G\ast F)(\N_{0})$ could not admit a stable recovery.
\medskip

\noindent{\bf Derivative Sampling.}  Uniform sampling of derivatives is a classical topic in sampling theory, see \cite{fogel1955note, jagerman1956some, linden1960generalization, papoulis1977generalized, rawn1989stable, zibulski1995frame} and references therein. However, relatively few papers have considered nonuniform sampling with derivatives. We refer to \cite{adcock2017density, grochenig1992reconstruction, razafinjatovo1994iterative} for the study of density of nonuniform derivative samples that give stable reconstructions.  In this paper, we provide new examples of nonuniform derivative sampling.  Consider the sampling on Paley-Wiener space $PW$ by the samples $\{F(n)\}_{n\ge 0}\cup\left\{\frac{-1}{2\pi i}F'(n)\right\}_{n<0}$. We note that if $f = \widehat{F}$, we have
$$
F'(\xi) =  \frac{d}{dx}\int_{-1/2}^{1/2} f(x)e^{-2\pi i \xi x }dx = \int_{-1/2}^{1/2} f(x) (-2\pi i x)e^{-2\pi i \xi x }dx
$$
Hence, the frame/basis and completeness properties are equivalent to that of the windowed exponentials ${\mathcal E}(\chi_{[-1/2,1/2]},{\mathbb N}_0)\cup {\mathcal E}( x\chi_{[-1/2,1/2]},{\mathbb N}^-)$. By Example \ref{example}(1), this collection is complete, but not forming a frame. Hence,
 
 \begin{theorem}
 The one-sided derivative sampling by  $\{F(n)\}_{n\ge 0}\cup\left\{\frac{-1}{2\pi i}F'(n)\right\}_{n<0}$ is not stable but it is injective.
 \end{theorem}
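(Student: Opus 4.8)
The plan is to transfer the problem from the Paley--Wiener space $PW$ to $L^2[-1/2,1/2]$ via the Fourier transform and to recognize the derivative sampling map as (a reindexing of) the analysis operator $\Phi_g$ of the windowed exponential system $\mathcal{F}(g)$ with $g(x) = x\chi_{[-1/2,1/2]}$, exactly as anticipated in the discussion preceding the statement. Once this identification is in place, ``injective'' translates to ``$\mathcal{F}(g)$ is complete'' and ``stable'' to ``$\mathcal{F}(g)$ is a frame,'' and the theorem follows from Example \ref{example}(1).

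First I would check that the sampling functionals behave as claimed. For $F \in PW$, write $f \in L^2[-1/2,1/2]$ for the Fourier transform of $F$, so that $F(\xi) = \int_{-1/2}^{1/2} f(x) e^{-2\pi i \xi x}\,dx$. Since $f$ and $x f(x)$ both lie in $L^1[-1/2,1/2]$, the function $F$ is continuously differentiable with $F'(\xi) = \int_{-1/2}^{1/2} f(x)(-2\pi i x) e^{-2\pi i \xi x}\,dx$, so each $F(n)$ and each $-\tfrac{1}{2\pi i}F'(n)$ is a bounded linear functional of $f$; a direct computation gives $F(n) = \langle f, e_n\rangle$ and $-\tfrac{1}{2\pi i} F'(n) = \langle f, g e_n\rangle$ with $g(x) = x\chi_{[-1/2,1/2]}$. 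Hence, after identifying $\mathbb{Z} = \mathbb{N}_0 \sqcup \mathbb{N}^-$ as in Section 3, the sampling map $F \mapsto \bigl(\{F(n)\}_{n\ge 0},\{-\tfrac{1}{2\pi i}F'(n)\}_{n<0}\bigr)$ equals $\Phi_g$ composed with the unitary $F \mapsto f$ from $PW$ onto $L^2[-1/2,1/2]$.

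Consequently the sampling map is injective if and only if $\Phi_g$ is injective, and it admits a bounded left inverse if and only if $\Phi_g$ is bounded above and below; by Theorem \ref{th1} these conditions say precisely that $\mathcal{F}(g)$ is complete, respectively a frame. Now I would invoke Example \ref{example}(1): the window $g(x) = x\chi_{[-1/2,1/2]}$ is real-valued, so $\mathcal{F}(g)$ is complete by Theorem \ref{threalg}(1) and the sampling is injective; on the other hand $0 \in [\mathrm{essinf}(g),\mathrm{esssup}(g)]$, so $\mathcal{F}(g)$ is not a frame by Theorem \ref{threalg}(2), no bounded reconstruction operator exists, and the sampling is therefore not stable.

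The only genuinely delicate point is fixing the right notion of ``stable sampling'': stability should encode both that the sample sequence lies in $\ell^2$ (the Bessel/upper-bound half, which is automatic here because $g \in L^\infty[-1/2,1/2]$) and the existence of a bounded reconstruction operator (the lower-bound half), so that ``stable'' is genuinely equivalent to ``$\mathcal{F}(g)$ is a frame.'' With this settled, everything else is routine bookkeeping with the Fourier transform and a citation of the earlier classification.
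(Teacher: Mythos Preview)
Your proposal is correct and follows essentially the same route as the paper: identify the sampling map with the analysis operator $\Phi_g$ for $g(x)=x\chi_{[-1/2,1/2]}$ via the Fourier transform, then invoke Example \ref{example}(1) (which rests on Theorem \ref{threalg}) to conclude completeness but failure of the frame inequality. Your write-up is in fact more careful than the paper's, which leaves the translation between ``injective/stable'' and ``complete/frame'' implicit.
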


However, if we restrict to a subclass of bandlimited function, we are still able to establish the stable sampling result.

\medskip

\begin{Prop}
Let $E = \{F\in PW: F(x)= F(-x)\}$ be the subspace of even functions on $PW$. Then there exists $c>0$ such that
$$
\sum_{n\ge0}|F(n)|^2+\sum_{n<0}\left|\frac{-1}{2\pi i}F'(n)\right|^2\ge c \|F\|^2, \forall F\in E.
$$
\end{Prop}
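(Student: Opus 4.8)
The plan is to exploit the evenness of $F$ to convert the mixed sampling data into a genuine Fourier frame on a subinterval, thereby gaining back a lower bound that is lost in the full space. Write $f = \widehat{F} \in L^2[-1/2,1/2]$; since $F$ is even, $f$ is even as well, so $f$ is determined by its restriction to $[0,1/2]$ and $\|F\|^2 = \|f\|^2_{L^2[-1/2,1/2]} = 2\|f\|^2_{L^2[0,1/2]}$. The samples are $F(n) = \int_{-1/2}^{1/2} f(x) e^{-2\pi i n x}\,dx = 2\int_0^{1/2} f(x)\cos(2\pi n x)\,dx$ for $n \ge 0$, and $\frac{-1}{2\pi i}F'(n) = \int_{-1/2}^{1/2} x f(x) e^{-2\pi i n x}\,dx$; using evenness of $f$ and oddness of $x\mapsto x$, this equals $-2i\int_0^{1/2} x f(x)\sin(2\pi n x)\,dx$ for $n < 0$. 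So up to harmless constants the problem reduces to showing that $\{\cos(2\pi n x) : n \ge 0\} \cup \{x\sin(2\pi n x) : n \ge 1\}$ is a frame (or at least satisfies a lower frame bound) for $L^2[0,1/2]$.

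The key observation is that $\{\cos(2\pi n x) : n\ge 0\} \cup \{\sin(2\pi n x): n \ge 1\}$ is (a constant multiple of) an orthonormal basis of $L^2[0,1/2]$ — this is just the standard cosine/sine expansion on a half-period, equivalently the image under the even/odd decomposition of the exponential basis $\{e^{2\pi i n x}\}_{n\in\Z}$ of $L^2[-1/2,1/2]$. Thus the question is whether replacing each $\sin(2\pi n x)$ by $x\sin(2\pi n x)$ destroys the lower frame bound. First I would set up the analysis operator and split $f \in L^2[0,1/2]$ into its ``cosine part'' and ``sine part'' via the orthogonal decomposition $L^2[0,1/2] = V_c \oplus V_s$, where $V_c = \overline{\mathrm{span}}\{\cos(2\pi n x)\}_{n\ge 0}$ and $V_s = \overline{\mathrm{span}}\{\sin(2\pi n x)\}_{n\ge 1}$. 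The cosine samples control the $V_c$-component of $f$ perfectly (Parseval). For the $V_s$-component, write $f_s \in V_s$; the sine samples of $x f_s$ are the sine-Fourier coefficients of the function $P_s(x f_s)$, where $P_s$ is the orthogonal projection onto $V_s$, so $\sum_{n\ge 1}|\langle x f_s, \sin(2\pi n\cdot)\rangle|^2 = \|P_s(x f_s)\|^2$. Hence it suffices to show the operator $f_s \mapsto P_s(x f_s)$ on $V_s$ is bounded below, and to handle the cross terms from the cosine samples acting on $x f_s$ and on $f_c$ simultaneously.

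The main obstacle is precisely showing that $S: f_s \mapsto P_s(M_x f_s)$ is bounded below on $V_s$, where $M_x$ is multiplication by $x$. One route: note $M_x$ is a positive self-adjoint operator on $L^2[0,1/2]$ with spectrum $[0,1/2]$, bounded below only by $0$, so the naive estimate fails exactly at $x=0$; the point is that functions in $V_s$ vanish (in an averaged sense) at $0$, which should compensate. Concretely I would try to show $\langle S f_s, f_s\rangle = \langle M_x f_s, f_s\rangle = \int_0^{1/2} x |f_s(x)|^2 dx$ cannot be small relative to $\|f_s\|^2$ for $f_s \in V_s$: if it were, $f_s$ would concentrate near $0$, but a Poincaré/Hardy-type inequality for the sine system (which forces $f_s(0)=0$ boundary behavior) gives $\int_0^{1/2} |f_s|^2 \le C\int_0^{1/2} x|f_s'|^2$ — though this brings in a derivative and is not quite what is needed. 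A cleaner alternative: realize that the full system $\mathcal{F}(g)$ with $g(x) = x\chi_{[-1/2,1/2]}$ restricted to even functions corresponds, after the $z\mapsto z^2$ type change of variables or direct computation, to a windowed exponential system whose Toeplitz-type operator is bounded below on the relevant subspace; I would invoke Theorem~\ref{th2}-style reasoning adapted to the even subspace, checking that the obstruction $0 \in \sigma_{ap}$ from Example~\ref{example}(1) came entirely from odd test functions (indeed $g$ is odd, so $T_g$ behaves differently on even vs. odd components). I expect the final proof to combine the exact Parseval identity on the cosine side with a quantitative non-concentration estimate $\int_0^{1/2} x|f_s|^2 \ge c\|f_s\|^2$ for $f_s$ in the sine span, the latter being the crux and provable either by a direct eigenvalue estimate for the compact perturbation $P_s M_x P_s - \frac14 I$ or by a compactness-plus-uniqueness argument (any $f_s \in V_s$ with $\int_0^{1/2} x |f_s|^2 = 0$ must vanish a.e.).
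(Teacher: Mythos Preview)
Your reduction to the half-interval is sound, but the step immediately after contains a genuine error that derails the rest of the argument. You claim that $\{\cos(2\pi n x): n\ge 0\}\cup\{\sin(2\pi n x): n\ge 1\}$ is (up to constants) an orthonormal basis of $L^2[0,1/2]$, and then split $L^2[0,1/2]=V_c\oplus V_s$. This is false: on $[0,1/2]$ the cosines $\{\cos(2\pi n x):n\ge 0\}$ are \emph{by themselves} an orthogonal basis (the standard cosine basis on a half-period), and so are the sines $\{\sin(2\pi n x):n\ge 1\}$. Hence $V_c=V_s=L^2[0,1/2]$, the ``orthogonal decomposition'' is vacuous, and the union you wrote is a redundant frame, not a basis. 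Consequently your ``crux'' inequality $\int_0^{1/2} x|f_s|^2\,dx\ge c\|f_s\|^2$ for $f_s\in V_s$ is simply the statement that multiplication by $x$ is bounded below on all of $L^2[0,1/2]$, which fails (take $f_s=\chi_{[0,\epsilon]}$).

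The irony is that the corrected observation makes the proof \emph{trivial}, and shorter than the paper's: since $\{\cos(2\pi n x):n\ge 0\}$ is already an orthogonal basis of $L^2[0,1/2]$, the cosine samples alone control $f$ and the derivative samples can be discarded entirely. Concretely, for even $f$ one has $\widehat f(n)=\widehat f(-n)$, so Parseval gives $\sum_{n\ge 0}|\widehat f(n)|^2=\tfrac12\bigl(\|f\|^2+|\widehat f(0)|^2\bigr)\ge \tfrac12\|f\|^2$, which is the desired lower bound with $c=\tfrac12$. The paper instead keeps both sums, writes $\sum_{n\ge 0}|\widehat f(n)|^2+\sum_{n\le -1}|\widehat{xf}(n)|^2=\|f\|^2-\bigl(\sum_{n\le -1}|\widehat f(n)|^2-\sum_{n\le -1}|\widehat{xf}(n)|^2\bigr)$, and bounds the bracket by $\tfrac{13}{24}\|f\|^2$ via Parseval and Cauchy--Schwarz, obtaining $c=\tfrac{11}{24}$. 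So your half-interval reduction is a genuinely different and cleaner route than the paper's --- once the basis claim is fixed --- and in fact yields a better constant; but the argument as written does not go through.
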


\begin{proof}
Note that $F$ is an even function if and only if $f = \widehat{F}$ is an even function. Therefore, it suffices to show that ${\mathcal E}(\chi_{[-1/2,1/2]},{\mathbb N}_0)\cup {\mathcal E}( x,{\mathbb N}^-)$ forms a windowed exponential on the subspace $\widehat{E} = \{f\in L^2[-1/2,1/2]: f(x) = f(-x)\}$. i.e. There exists $c>0$ such that
$$
\sum_{n\ge 0}|\widehat{f}(n)|^2 +\sum_{n\le -1}|\widehat{ xf}(n)|^2 \ge c\|f\|^2, \forall f\in \widehat{E}.
$$
(recall that $e_n(x) = e^{2\pi i nx}$). Throughout the proof, we let $I = [-1/2,1/2]$. By the Parseval's identity,
$$
\sum_{n\ge 1}|\widehat{f}(n)|^2+\sum_{n\le -1}|\widehat{f}(n)|^2 = \|f\|^2-(\widehat{f}(0))^2  =\int_I|f|^2 - \left|\int_I f\right|^2.
$$
As $\widehat{f}(n) = \widehat{f}(-n)$, the sum on the left hand side are the same. Thus,
\begin{equation}\label{eq6.3}
\sum_{n\le -1}|\widehat{f}(n)|^2 = \frac{\int_I|f|^2 - |\int_I f|^2}{2}.
\end{equation}
Similarly, we notice that $ xf$ is an odd function, so we have
\begin{equation}\label{eq6.4}
\sum_{n\le -1}|\widehat{ xf}(n)|^2 = \frac{\int_I\left|xf \right|^2 - \left|\int_I xf\right|^2}{2}.
\end{equation}
Now, we write
\begin{equation}\label{eq.65}
\sum_{n\ge 0}|\widehat{f}(n)|^2 +\sum_{n\le -1}|\widehat{ xf} (n)|^2  = \|f\|^2-\left(\sum_{n\le -1}|\widehat{f}(n)|^2-\sum_{n\le -1}| \widehat{ xf}(n)|^2\right)
\end{equation}
By (\ref{eq6.3}) and (\ref{eq6.4}),
$$
\begin{aligned}
\sum_{n\le -1}|\widehat{f}(n)|^2-\sum_{n\le -1}|\widehat{ xf}(n)|^2=&\frac{\int_I|f|^2 - \int_I | xf|^2-\left(\left|\int_I f\right|^2- \left|\int_I  x f\right|^2\right)}{2}\\
\le& \frac{\int_I|f|^2+\left|\int_I x f\right|^2}{2}\\
\le &\frac{\int_I|f|^2+\left(\int_I x^2 \right)\cdot\left(\int_I |f|^2\right)}{2} = \frac{13}{24}\|f\|^2.
\end{aligned}
$$
Putting back to (\ref{eq.65}), we obtain
$$
\sum_{n\ge 0}|\widehat{f}(n)|^2 +\sum_{n\le -1}|\widehat{ xf}(n)|^2\ge \|f\|^2-\frac{13}{24}\|f\|^2 = \frac{11}{24}\|f\|^2.
$$
\end{proof}

\section{Conclusions}
In this paper, we provide a  study on a specific class of under-sampled windowed exponentials. By making connections to Toeplitz operator, we show for real-valued $g$, the completeness/frame/basis property of $$ {\mathcal F}(g): =\{e^{2\pi i n x}: n\ge 0\}\cup \{g(x)e^{2\pi i nx}: n<0\}$$
 only depends on the essential range of $g$. When $g=e^{2\pi i \xi x}$, we  completely characterized  $\xi$ such that the set
 $\mathcal{ F}(g)$ is complete/frame/basis, which complements those existing results in the literature (e.g. \cite{casazza2006density}). The theory we developed also provides nontrivial new examples of dynamical sampling and nonuniform derivative sampling.  These results shed some light on the interesting connection between the frame theory of windowed exponentials and the Toeplitz operators.

 Our interest, in general, will be for a given countable set of $\Lambda_1,...,\Lambda_N$, can we develop some necessary and sufficient condition to determine $g_1,...,g_N$  so that the windowed exponential
$$
\bigcup_{j=1}^N{\mathcal E}(g_j,\Lambda_j)
$$
is complete or forms a frame/basis on $L^2[-1/2,1/2]$ (or any $L^2(\Omega)$)? Particularly when $\Lambda_j$ are under the critical sampling rate, the classification of $g_j$ is an interesting problem.  Furthermore,  it is also immediate to see any such classification will lead to a new sampling scheme in dynamical samplings and derivative samplings. So far, the classification is only available when $\Lambda = m{\mathbb Z}$ (\cite{aldroubi2015exact}) and the result we presented in this paper. We will leave more classes of undersampled windowed exponentials with samples taken from subgroups or semi-groups for future study.

\section*{Acknowledgement}
We would like to thank anonymous reviewers for their very helpful comments. Sui Tang is supported by the AMS Simons travel grant.

\medskip

\bibliography{referencesLT}{}
\bibliographystyle{siam}

\end{document}